\numberwithin{equation}{section}
\newtheorem{theorem}{Theorem}[section]
\newtheorem{lemma}[theorem]{Lemma}
\newtheorem{thm}[theorem]{Theorem}
\newtheorem{cor}[theorem]{Corollary}
\newcommand{\Rmnum}[1]{\expandafter\@slowromancap\romannumeral #1@}
\begin{document}

\title{Decay Properties of Invariant Measure and Application to Elliptic Homogenization of Non-divergence Form with an Interface}
\author[a]{Pengxiu Yu\thanks{Email: Pxyu@ruc.edu.cn}}
\author[b,c]{Yiping Zhang\thanks{Corresponding author, Email: zhangyiping161@mails.ucas.ac.cn}}
\affil[a]{\footnotesize{School of Mathematics,
Renmin University of China, Beijing 100872, China}}
\affil[b]{\footnotesize{School of Mathematics and Statistics, and Hubei Key Laboratory of Mathematical Sciences, Central China Normal University, Wuhan 430079, China}}
\affil[c]{\footnotesize{Key Laboratory of Nonlinear Analysis \& Applications (Ministry of Education), Central China Normal University, Wuhan 430079, China}}
\date{}
\maketitle
\begin{abstract}
Using the self-contained PDE analysis, this paper investigates the existence and the decay properties of the invariant measure in elliptic homogenization of non-divergence form with an interface assumptions on the leading coefficient $A$ and the drift $b$ for $b_1\equiv 0$, which partially provides an alternative proof of the previous work by Hairer and Manson [Ann. Probab. 39(2011) 648-682]. Moreover, as a direct application after using the analysis by the second author [Calc. Var. Partial Differ. Equ. 64(2025) No. 114], we obtain the quantitative estimates for the homogenization problem.

\end{abstract}

\section{Introduction}\label{s1}
\subsection{Introduction and Main Results}
Consider an elliptic operator of the following elliptic equation of non-divergence form:
$$\mathcal{L}u=a_{ij}\partial_{ij}u+b_i\partial_iu,\quad u\in C^\infty_0(\Omega),$$
where the leading coefficient $A=(a_{ij})$ is a matrix function defined on a domain $\Omega\subset\mathbb{R}^d$ with values in the space of positive symmetric linear operators on $\mathbb{R}^d$ and $b=(b_i)$ is a vector field on $\mathbb{R}^d$. For Borel measure $\mu$ defined on $\Omega$, the dual equation of the equation above is the following
$$\mathcal{L}^*\mu=0,$$
which is understood in the following sense:
$$\int_{\Omega}\mathcal{L}ud\mu=0,\quad \forall u\in C^\infty_0(\Omega).$$

Under suitable smoothness assumptions on the coefficients $A$ and $b$, the Borel measure $\mu$ admits a density $m$ with respect to Lebesgue measure such that $d\mu=mdx$ on $\Omega$, and the density $m$ satisfies the following doubly divergence equation:
$$\partial_{ij}(a_{ij}m)-\partial_i(b_im)=0\quad \text{in }\Omega.$$

There is a vast literature \cite{MR765409,MR3694737,MR3747493,MR4887768} on the study of the doubly divergence equation above, concerning the existence, uniqueness, positivity and the regularity of the solution. We refer the readers to their references therein for more results. For readers' convenience, refer to the books \cite{MR3443169, MR987631} and their references therein for more results concerning this topic.

From the point of stochastic analysis, the elliptic operator of non-divergence form can be viewed as the generator of the diffusion process determined by the following stochastic differential equation:
$$\left\{\begin{aligned}
dX_t&=b(X_t)dt+\sqrt{2}\sigma(X_t)dW_t,\\
X_0&=x,
\end{aligned}\right.$$
where $\sigma$ is the square root of the positive definite matrix $A=(a_{ij})$ and $W_t$ is a standard d-dimensional Wiener process. For more details, we refer to \cite{MR1121940} for readers' convenience. For the probabilistic problems and methods in periodic homogenization theory of non-divergence form, see \cite{bensoussan2011asymptotic, MR1329546,MR2382139} for the details.

Now, return to our setting. Arising from diffusion process with drifts under diffusive rescaling, we consider the elliptic equations of non-divergence form with unbounded drifts, whose leading coefficients and junior
coefficients are periodic outside of an ``interface region" of finite thickness.

Precisely, for $0<\varepsilon<1$ and $d\geq 3$, we consider the following equation
\begin{equation}\label{1.1}
\mathcal{L}_\varepsilon u_\varepsilon={{a}}_{ij}^\varepsilon\partial_{ij}u_\varepsilon+\frac{1}{\varepsilon}b_i^\varepsilon\partial_i u_\varepsilon=f\quad \text{ in }\quad \mathbb{R}^d,\\
\end{equation}
where we assume that there exist positive constants $0<\mu\leq \mu_1$ and $\beta\in(0,1)$, such that for any $\xi\in \mathbb{R}^d$ and $ {A}=:(a_{ij})$ and $b=:(b_i)$,
\begin{equation}\label{1.2}\begin{aligned}
A=A^*;\ \mu|\xi|^2\leq A\xi\cdot\xi\leq \mu_1 |\xi|^2;\\
A\in C^{1,\beta}(\mathbb{D});\ b\in C^{0,\beta}(\mathbb{D});
\end{aligned}\end{equation}
with ${A}^*$ being the transpose of ${A}$, and the leading coefficient ${A}$ and the drift $b$ satisfy
\begin{equation}\label{1.3}
{A}(y)=\left\{\begin{aligned}&{A}_+(y),\quad \text{if}\quad y_1>1,\\[5pt]
&C^{1,\beta}\text{ connection},  \text{ if } -1\leq y_1\leq 1,\\[5pt]
&{A}_-(y),\quad \text{if}\quad y_1<-1,
\end{aligned}\right.\end{equation}
and
\begin{equation}\label{1.4}
b(y)=\left\{\begin{aligned}&b_+(y),\quad \text{if}\quad y_1>1,\\[5pt]
&C^{0,\beta}\text{ connection},  \text{ if } -1\leq y_1\leq 1,\\[5pt]
&b_-(y),\quad \text{if}\quad y_1<-1,
\end{aligned}\right.\end{equation}
for 1-periodic $A_\pm$ and 1-periodic $b_\pm$ satisfying the conditions in \eqref{1.2}, respectively.

That ${a}_{ij,\pm}$ is 1-periodic means that ${a}_{ij,\pm}(y+z)={a}_{ij,\pm}(y)$, for any
$y\in\mathbb{R}^d$ and $z\in \mathbb{Z}^d$.  $b\in C^{0,\beta}(\mathbb{D})$ means that for any $x,y\in \mathbb{D}$, we have $|b(x)-b(y)|\leq C|x-y|^\beta$ and $A\in C^{1,\beta}(\mathbb{D})$ means that $\nabla A\in C^{0,\beta}(\mathbb{D})$. The summation convention is used throughout the paper. Meanwhile, we will denote $\partial_i=:\partial_{y_i}$  and $\mathbb{Y=}:[0,1)^d\cong
\mathbb{R}^d/\mathbb{Z}^d$ if the content is understood. We define $\mathbb{D}=:\mathbb{R}\times \mathbb{T}^{d-1}$ with $\mathbb{T}=:\mathbb{R}/\mathbb{Z}$, and we say that $u$ is $\mathbb{D}$-periodic if $u$ is 1-periodic in $y'$ for $y=(y_1,y')\in \mathbb{R}^d$. We use the notation $u\in\mathcal{B}(\mathbb{D})$ means that $u$ is $\mathbb{D}$-periodic with the norm $\|u\|_{\mathcal{B}(\mathbb{D})}$. Moreover, we will use the homogenous Sobolev space
$$\dot{H}^1(\mathbb{R}^d)=\left\{v:\nabla v\in L^2(\mathbb{R}^d),\|v\|_{L^{\frac{2d}{d-2}}(\mathbb{R}^d)}\leq C(d)\|\nabla
v\|_{L^2(\mathbb{R}^d)}\right\}$$ with $d\geq 3$. Throughout the paper, we omit the differential sign $dx$ in an integral if the content is understood.

Before we move forward, we first introduce some basic results in periodic homogenization of the non-divergence elliptic form with unbounded drifts. Precisely, for $0<\varepsilon<1$, define the operator

\begin{equation}\label{1.5}
\mathcal{L}_{\pm,\varepsilon}=:{{a}}_{\pm,ij}^\varepsilon\partial_{ij}\cdot+\frac{1}{\varepsilon}b^\varepsilon_{\pm} \cdot\nabla\cdot.
\end{equation}

Under the conditions \eqref{1.2}-\eqref{1.4},
it is known in \cite[Proposition 2.1]{MR4657308} that the unique 1-periodic invariant measure $m_\pm$ associated with $\mathcal{L}_{\pm,\varepsilon}$ are defined as
\begin{equation}\label{1.6}
\left\{\begin{aligned}
\partial_{ij}\left({a}_{\pm, ij}(y) m_\pm(y)\right)-\partial_{i}\left(b_{\pm,i}(y)m_\pm(y)\right)=0\text{ in }\mathbb{Y},\\
\int_\mathbb{Y} m_\pm=1,\ 0<\tilde{c}_1\leq m_\pm,\ m_\pm\in C^{1,\beta}(\mathbb{Y}).
\end{aligned}\right.
\end{equation}

\noindent In fact, it directly follows from \cite[Proposition 2.1]{MR4657308} that $m_\pm\in C^{0,\alpha}(\mathbb{Y})$ for some $\alpha\in (0,1)$. Now, by rewriting the equation \eqref{1.6} as the following
$$\partial_{i}\left({a}_{\pm, ij} \partial_j m_\pm\right)=\partial_{i}\left(b_{\pm,i}m_\pm-\partial_j {a}_{\pm, ij} \cdot m_\pm \right)\text{ in }\mathbb{Y},$$
and by using the $C^{1,\alpha}$ theory twice, we know that $m_\pm\in C^{1,\beta}(\mathbb{Y})$.

Note that we also need that the drift $b$ satisfies the so-called centering conditions:
\begin{equation}\label{1.7}
\int_{\mathbb{Y}}b_{\pm,i}m_\pm=0, \quad \text{for }i=1,\cdots,d.
\end{equation}

In fact, from the perspective of diffusion process and with the help of the framework provided by Freidlin and Wentzell \cite{MR1245308}, the previous work by Hairer and Manson \cite{MR2789509} had investigated the limiting long time/large scale behaviour under diffusive rescaling and identified the generator of the limiting process, if the drift $b$ is periodic outside of an ``interface region" of finite thickness with $a_{ij}=\delta_{ij}$. Moreover, by using the decay properties of the invariant measure obtained in \cite{MR2789509} and transferring the original homogenization problem of non-divergence form into the divergence form with an interface of finite thickness on the leading coefficient, the second author \cite{MR4882925} obtained the size
estimates of the Green function as well as its gradient for this oscillating operator of non-divergence form, which would imply the desired $O(\varepsilon)$ convergence rates.

Note that in \cite{MR4882925}, the starting point is the decay properties of the invariant measure obtained in \cite{MR2789509}, which is also a crucial point. Therefore, in this paper, we mainly want to construct a solution $m$ satisfing
\begin{equation}\label{1.8}
\left\{\begin{aligned}
\partial_{ij}\left(a_{ij}(y) m(y)\right)-\partial_{i}\left(b_{i}(y)m(y)\right)=0\text{ in }\mathbb{D},\\[5pt]
m \text{ is $\mathbb{D}$-periodic, } 0<c'_1\leq m\leq c'_2.
\end{aligned}\right.
\end{equation}
Moreover, the solution $m$ should satisfy the desired decay estimates as in \eqref{1.12}.

There is a vast literature on the qualitative and quantitative results of non-divergence form in elliptic homogenization.
For $b=0$, the first significant qualitative analysis may date back to Avellaneda-Lin \cite{MR978702}, where by using compactness methods, the authors obtained the uniform
$C^{0,\alpha}$, $C^{1,\alpha}$ and $C^{1,1}$ a priori estimates for solutions of boundary value problems of
non-divergence form in elliptic periodic homogenization.
With the drift $b=0$, an optimal in general $O(\varepsilon)$ convergence rate in $L^\infty$ and  $W^{1,p}$ was obtained by Guo-Tran-Yu in \cite{MR4354730} and by Sprekeler-Tran in
\cite{MR4308690} for the periodic setting, respectively. Moreover, under some additional structure on the coefficients, a conjecture of an $O(\varepsilon^2)$ convergence rate was proposed by Guo-Tran \cite{guo2020conjecture} in 2-D in non-divergence form.

In homogenization theory, for the random setting for uniformly elliptic equations of
non-divergence form, Armstrong-Lin \cite{MR3665674} have obtained
the correctors, which are stationary and exist in dimensions five or higher. And in \cite{MR3269637}, a previous work by Armstrong-Smart of the non-divergence form in ergodic stochastic case, a new method was introduced to investigate the qualitative result. Moreover, we refer to \cite{MR4491712,guo2023optimal} for the studies from the random walk in a random environment, where the optimal convergence rate $O(\varepsilon)$ for the Dirichlet problem in the discrete i.i.d. setting was obtained by Guo-Tran \cite{guo2023optimal}.

Now, based on the fact that the $O(\varepsilon)$ convergence rate is optimal in general, the study of the invariant measure plays an important role in obtaining this optimal in general convergence rate. By multiplying the original oscillating equation of non-divergence form and then transferring the non-divergence form into the divergence form, where the homogenization theory in divergence form can be applied, the second author \cite{MR4657308,MR4882925} successfully obtained the $O(\varepsilon)$ convergence rate in the case of the periodic setting and the problem with an interface, respectively. Moreover, the quantitative estimates of the parabolic 
Green function and the stationary invariant measure in stochastic homogenization of elliptic equations of 
non-divergence form have been obtained by Armstrong-Fehrman-Lin \cite{2022Green}.

For the studies in homogenization theory with an interface, we refer to \cite{MR3421758,MR2789509,MR3974127,MR4207176,MR4299825} and their references therein for more results.

Before moving forward, we state some basic notations. For any $R_2>R_1$, integrating the equation \eqref{1.6} over $(R_1,R_2)\times \mathbb{T}^{d-1}$ after using the $\mathbb{D}$-periodicity of $m_\pm$ yields that
\begin{equation*}\begin{aligned}
\int_{\mathbb{T}^{d-1}\times\{R_2\}}\partial_1(a_{+,11}m_+)-b_{+,1}m_+=
\int_{\mathbb{T}^{d-1}\times\{R_1\}}\partial_1(a_{+,11}m_+)-b_{+,1}m_+,
\end{aligned}\end{equation*}
which implies that for any $R\in (-\infty,+\infty)$, the integral
\begin{equation}\label{*}
\begin{aligned}
\int_{\mathbb{T}^{d-1}\times\{R\}}\partial_1(a_{+,11}m_+)-b_{+,1}m_+=0,
\end{aligned}\end{equation}
due to
\begin{equation*}\begin{aligned}
\int_{\mathbb{T}^{d}}\partial_1(a_{+,11}m_+)-b_{+,1}m_+=0,
\end{aligned}\end{equation*}
where we have used the centering condition \eqref{1.7} in the equality above. In order to state our main theorem, we additionally assume 
\begin{equation}\label{1.9}
b_1\equiv0.
\end{equation}
Note that the above assumption implies that $b_{\pm,1}\equiv0$ and the connection in \eqref{1.4} is the trivial connection. Now, the equality \eqref{*} reads as 
\begin{equation}\label{1.10}
\int_{\mathbb{T}^{d-1}\times\{R\}}\partial_1(a_{+,11}m_+)=0,\quad\text{ for any }R\in (-\infty,+\infty),
\end{equation}
which further implies that 
\begin{equation*}\label{1.11}
\int_{\mathbb{T}^{d-1}\times\{R\}}a_{+,11}m_+\text{ is a positive constant},\quad\text{ for any }R\in (-\infty,+\infty).
\end{equation*}
Similarly, we know that 
\begin{equation*}
\int_{\mathbb{T}^{d-1}\times\{R\}}a_{-,11}m_-\text{ is a positive constant},\quad\text{ for any }R\in (-\infty,+\infty).
\end{equation*}
To proceed, for every constant $q_+>0$, we choose a constant $q_->0$ satisfying 
\begin{equation}\label{1.112}
q_+\int_{\mathbb{T}^{d-1}\times\{R\}}a_{+,11}m_+=q_-\int_{\mathbb{T}^{d-1}\times\{-R\}}a_{-,11}m_- \quad\text{ for any }R\in (-\infty,+\infty).
\end{equation}
\noindent Now, we are ready to state our main result as below:
\begin{thm}\label{t1.1}Under the conditions \eqref{1.2}-\eqref{1.4}, \eqref{1.7} and \eqref{1.9} with $d\geq 2$, for positive constants $q_\pm >0$ satisfying \eqref{1.112}, there exists an invariant measure $m$ satisfying
\begin{equation}\label{1.12}\left\{\begin{aligned}
&\partial_{ij}\left(a_{ij}(y) m(y)\right)-\partial_{i}\left(b_{i}(y)m(y)\right)=0\text{ in }\mathbb{D},\ m \text{ is $\mathbb{D}$-periodic,}\\[5pt]
&\left(|m-q_+m_+|+|\nabla (m-q_+m_+)|\right)(y)\leq C\exp\{-C|y_1|\},\quad {for }\ y_1>1,\\[5pt]
&\left(|m-q_-m_-|+|\nabla (m-q_-m_-)|\right)(y)\leq C\exp\{-C|y_1|\},\quad  {for }\ y_1<-1,\\[5pt]
&\frac 12\min_{y\in \mathbb{T}^{d}}(q_+m_+,q_-m_-)\leq m\leq \frac 32\max_{y\in \mathbb{T}^{d}}(q_+m_+,q_-m_-),\quad \text{for any }\ y\in\mathbb{D},
\end{aligned}\right.\end{equation}
where the constant $C$ is a universal constant depending only on $q_\pm$, $d$ and the coefficients.
\end{thm}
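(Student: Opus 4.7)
The plan is to construct $m$ as a perturbation of a natural interpolation between the two periodic invariant measures. Pick a smooth cutoff $\eta\colon\mathbb{R}\to[0,1]$ with $\eta=1$ on $\{y_1\geq 2\}$ and $\eta=0$ on $\{y_1\leq -2\}$, and set
$$m_0(y):=\eta(y_1)\,q_+m_+(y)+(1-\eta(y_1))\,q_-m_-(y),$$
so that by \eqref{1.6} the residual $f_0:=\mathcal{L}^*m_0=\partial_{ij}(a_{ij}m_0)-\partial_i(b_im_0)$ is supported in $\{|y_1|\leq 2\}$. A short calculation using $\mathbb{D}$-periodicity in $y'$, the assumption $b_1\equiv 0$ from \eqref{1.9}, and the constancy of the one-dimensional fluxes \eqref{1.10}--\eqref{1.112} yields the compatibility identity $\int_{\mathbb{D}}f_0=0$. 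The problem then reduces to finding a $\mathbb{D}$-periodic $v$ solving $\mathcal{L}^*v=-f_0$ on $\mathbb{D}$ with $|v|+|\nabla v|\lesssim e^{-c|y_1|}$; the invariant measure will be $m:=m_0+v$.

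To produce $v$ I would solve the auxiliary Dirichlet problem $\mathcal{L}^* v_R=-f_0$ on $\mathbb{D}_R:=(-R,R)\times\mathbb{T}^{d-1}$ with $v_R=0$ on $\{\pm R\}\times\mathbb{T}^{d-1}$. Existence in $C^{1,\beta}(\overline{\mathbb{D}_R})$ comes from the doubly-divergence Dirichlet theory cited in \cite{MR765409,MR3694737} plus Schauder upgrades. The key technical point is that $v_R$ and $\nabla v_R$ decay exponentially in $|y_1|$ with constants independent of $R$. For this, on $\{1<y_1<R\}$ the function $v_R$ satisfies the homogeneous doubly-divergence equation with the $1$-periodic data $A_+,b_+$, and the substitution $v_R=m_+w$ (as in the second author's transformation in \cite{MR4882925}) recasts it as the divergence-form problem
$$\partial_i\bigl(m_+a_{+,ij}\partial_jw\bigr)+c_i\partial_iw=0,\qquad c_i:=\partial_j(a_{+,ij}m_+)-b_{+,i}m_+,$$
with $\partial_ic_i=0$ by \eqref{1.6}, i.e.\ a uniformly elliptic, symmetrizable equation with a bounded divergence-free drift and $\mathbb{D}$-periodic coefficients on a half-cylinder. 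A Saint-Venant/Caccioppoli iteration over unit slabs $\{k<y_1<k+1\}\times\mathbb{T}^{d-1}$, combined with Poincar\'e on each slab applied to $w$ minus its slab-average, produces geometric decay of the local Dirichlet energy, and De Giorgi--Nash--Moser together with interior $C^{1,\beta}$ regularity promote this to pointwise exponential decay of $w$ and $\nabla w$, hence of $v_R$ and $\nabla v_R$. The centering condition \eqref{1.7} enters precisely to kill the would-be constant mode of $w$, and \eqref{1.112} is what makes the amplitudes on the two sides compatible.

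Passing $R\to\infty$ by Arzel\`a--Ascoli with the uniform decay and interior regularity produces the $v$, and hence $m=m_0+v$, satisfying the first three lines of \eqref{1.12}. The two-sided bound $\tfrac12\min_{\mathbb{T}^d}(q_\pm m_\pm)\leq m\leq \tfrac32\max_{\mathbb{T}^d}(q_\pm m_\pm)$ follows by a standard splitting: choose $N$ large enough that $|v|\leq \tfrac12\min_{\mathbb{T}^d}(q_\pm m_\pm)$ for $|y_1|\geq N$, which immediately gives the bound there; inside $\{|y_1|\leq N\}\times\mathbb{T}^{d-1}$ one uses continuity of $m$ together with the strong minimum principle for doubly-divergence equations (cf.\ \cite{MR3443169}) applied with the already-established values at $|y_1|=N$. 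The principal obstacle I anticipate is the exponential decay step: making the Caccioppoli-plus-Poincar\'e iteration truly slab-geometric, in a way that is independent of $R$ and correctly handles the divergence-free drift $c_i$ so that no kernel direction survives.
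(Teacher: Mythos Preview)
Your overall strategy matches the paper's: both set $m=m_0+v$ with $m_0$ a cutoff interpolation between $q_\pm m_\pm$, solve the truncated Dirichlet problem for $v_R$ on $\mathbb{D}_R$, establish uniform-in-$R$ exponential decay, and pass to the limit; positivity then follows from decay plus a maximum principle on a finite slab (the paper uses the weak maximum principle of Lemma~\ref{l2.1} rather than a strong minimum principle, but the effect is the same).

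The substantive difference is in the decay step. The paper never makes your substitution $v_R=m_+w$; instead it works directly with $v_R$ and introduces an auxiliary \emph{dual} function $u_R$ solving $a_{ij}\partial_{ij}u_R+b_i\partial_iu_R=a_{11}v_R/m_+$ on $(1,R)\times\mathbb{T}^{d-1}$ (Lemmas~\ref{l2.4}--\ref{l2.7}). Pairing the equation for $v_R$ against $u_R$ and vice versa yields a closed differential inequality $E(R_1)\leq -CE'(R_1)$ for the combined energy $E=\int_{R_1}^\infty(|v|^2+|\nabla v|^2+|\nabla u|^2)$, and Gronwall finishes. Your route, by contrast, converts the doubly-divergence equation for $v_R$ into the clean divergence-form equation $\partial_i\bigl((a_{ij}m_++\phi_{ij})\partial_jw\bigr)=0$ and runs a direct Saint-Venant/Caccioppoli iteration on $w$. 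This is more elementary and bypasses the dual construction altogether; the paper's approach, though longer, stays at the level of $v_R$ itself.

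One point to sharpen: it is not the centering condition \eqref{1.7} per se that kills the constant mode of $w$, but the slice identity $\int_{\mathbb{T}^{d-1}\times\{y_1\}}a_{11}v_R\equiv 0$ (equivalently, the $a_{11}m_+$-weighted average of $w$ on each slice vanishes), which the paper derives in Lemma~\ref{l2.3} from $b_1\equiv 0$, the matching condition \eqref{1.112}, and the Dirichlet data at $\pm R$. This is precisely the ingredient your Caccioppoli iteration needs: it bounds the slab average $|\bar w_k|$ by $\|\nabla w\|_{L^2}$ on that slab, so that $\int_k^{k+1}|w|^2\leq C\int_k^{k+1}|\nabla w|^2$ and the geometric decay closes. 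With that identity in hand your argument goes through; without it the constant-in-$w$ kernel survives and the iteration stalls, exactly as you anticipated.
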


As stated previously, a combination of the above decay estimates of the invariant measure $m$ in \eqref{1.12} and the method used in \cite{MR4882925} enable us to obtain the effective equation and the $O(\varepsilon)$ convergence rates for the oscillating problem in non-divergence form. See Section \ref{s3} for the details.

\subsection{Comparison to related works}
To further underline the novelty of our study, a comparison of our results with those obtained in \cite{MR2789509,MR4882925} is stated as followings. 

(1) In \cite{MR2789509}, Hairer and Manson used probabilistic methods to identify the generator of the limiting process, if the drift $b\in C^\infty$ is periodic outside of an ``interface region" of finite thickness and the leading coefficient $A=\delta_{ij}$ (actually, as pointed out in \cite{MR2789509}, it is
straightforward to adapt the proofs in \cite{MR2789509} to cover the case of nonconstant diffusivity as well). In this paper, however, we employ self-contained PDE analysis to investigate the decay estimates of the invariant measure under the assumptions $A\in C^{1,\beta}$ and $b\in C^{0,\beta}$ for some $\beta\in (0,1)$. Additionally, for the method used in this paper, we need to assume that the condition \eqref{1.9} holds true.

(2) In \cite{MR4882925}, by using the invariant measure obtained in \cite{MR2789509}, the second author determined the effective equation and obtained the $O(\varepsilon)$ convergence rates under the assumption that $A\in C^\infty$ and $b\in C^\infty$. In this paper, however, since the desired decay estimates of the invariant measure hold true under the assumptions $A\in C^{1,\beta}$ and $b\in C^{0,\beta}$ for some $\beta\in (0,1)$, under these weaker smoothness conditions, the same quantitative estimates (including the effective equation and the $O(\varepsilon)$ convergence rates) continue to hold true.

\section{Proofs of Theorem \ref{t1.1}}\label{s2}
We begin with the following weak maximum principle, where a similar proof can be found in \cite[Theorem 2.1.8]{MR3443169}.
\begin{lemma}[Weak maximum principle]\label{l2.1}Assume that $A=(a_{ij})\in C^{0,1}(\mathbb{R}^d)$ satisfies the ellipticity condition $\eqref{1.2}_1$ (the first line of \eqref{1.2}) and $b=(b_i)\in L^\infty(\mathbb{R}^d)$.
For some $R>0$, we assume $m_R\in H^1(\mathbb{D}_R)$  satisfies  the following equation
\begin{equation}\label{2.1}
\left\{\begin{aligned}
&\partial_{ij}\left(a_{ij} m_R\right)-\partial_{i}\left(b_{i}m_R\right)=0\text{ in }\mathbb{D}_R=(-R,R)\times \mathbb{T}^{d-1},\\[5pt]
&m_R=g  \text{ on }\{|y_1|=R\}\times\mathbb{T}^{d-1},
\end{aligned}
\right.
\end{equation}
then we have
\begin{equation}\label{2.2}
\min_{|y_1|=R}g\leq m_R\leq\max_{|y_1|=R} g
\end{equation}
for any $y\in D_R$.
\end{lemma}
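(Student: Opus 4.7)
The plan is to recast the doubly-divergence equation as a standard divergence-form second-order elliptic PDE in $m_R$ and then apply a Stampacchia-type truncation, following the strategy of \cite[Theorem~2.1.8]{MR3443169}. Since $A\in C^{0,1}$, the double divergence can be opened up as
$$\partial_{ij}(a_{ij}m_R)-\partial_i(b_im_R)=\partial_i\bigl(a_{ij}\partial_j m_R+\tilde b_i m_R\bigr)=0,$$
with $\tilde b_i:=\partial_j a_{ij}-b_i\in L^\infty(\mathbb R^d)$. The associated weak formulation reads $\int_{\mathbb D_R}a_{ij}\partial_j m_R\,\partial_i\varphi\,dy=-\int_{\mathbb D_R}\tilde b_i m_R\,\partial_i\varphi\,dy$ for every $\varphi\in H^1(\mathbb D_R)$ that is $\mathbb T^{d-1}$-periodic in $y'$ and vanishes on $\{|y_1|=R\}\times\mathbb T^{d-1}$.

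For the upper bound, I would set $M:=\max_{|y_1|=R}g$ and choose as test function $v:=(m_R-M)_+$. Since $m_R=g\leq M$ on $\{|y_1|=R\}\times\mathbb T^{d-1}$ and $m_R$ is periodic in $y'$, the function $v$ is admissible. Substituting and using $\nabla m_R=\nabla v$ on $\{m_R>M\}$ together with $m_R=v+M$ there gives an energy identity of the form
$$\int_{\mathbb D_R}a_{ij}\partial_j v\,\partial_i v\,dy=-\int_{\mathbb D_R}\tilde b_i(v+M)\,\partial_i v\,dy.$$
Combining the ellipticity of $A$, Cauchy-Schwarz, Young's inequality, and the Poincar\'e inequality on $\mathbb D_R$ in the $y_1$-direction (available because $v$ vanishes on $\{|y_1|=R\}$), the right-hand side should be absorbed into the left-hand side, forcing $v\equiv0$, i.e.\ $m_R\leq M$. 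The lower bound $m_R\geq\min_{|y_1|=R}g$ follows by the symmetric argument with test function $(\min_{|y_1|=R}g-m_R)_+$.

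The main obstacle is that the resulting divergence-form equation carries a bounded drift $\tilde b_i$ but no definite sign on the zeroth-order term that would emerge from differentiating it; consequently, the usual weak maximum principle for divergence-form operators cannot be invoked as a black box, and one must be careful even to make sense of $\partial_i\tilde b_i$ since only $A\in C^{0,1}$ is assumed. The key will be to exploit the conservative (flux) structure — namely that $F_i:=a_{ij}\partial_j m_R+\tilde b_i m_R$ is strongly divergence-free — together with the cylindrical geometry (homogeneous Dirichlet on $\{|y_1|=R\}$, periodicity in $y'$), so as to balance the $v$- and $M$-contributions of the drift term and close the energy estimate without any further integration by parts on $\tilde b$. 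This is precisely the cancellation carried out in \cite[Theorem~2.1.8]{MR3443169}, and the task reduces to adapting that argument to the periodic-in-$y'$ strip $\mathbb D_R$.
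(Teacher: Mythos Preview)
Your plan has a concrete gap: testing with $v=(m_R-M)_+$ does not close the estimate. After the integration by parts you obtain
\[
\mu\int_{\mathbb D_R}|\nabla v|^2 \;\le\; \|\tilde b\|_{L^\infty}\int_{\mathbb D_R}|v+M|\,|\nabla v|
\;\le\; \|\tilde b\|_{L^\infty}\bigl(\|v\|_{L^2}+|M|\,|\mathbb D_R|^{1/2}\bigr)\|\nabla v\|_{L^2}.
\]
The term $\|\tilde b\|_{L^\infty}\|v\|_{L^2}\|\nabla v\|_{L^2}$, after Young and Poincar\'e, becomes $\|\tilde b\|_{L^\infty}C_P(R)\|\nabla v\|_{L^2}^2$ with $C_P(R)\sim R$; for large $R$ (or large drift) this cannot be absorbed into the left-hand side and the inequality is vacuous. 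Worse, the constant contribution $|M|\,\|\tilde b\|_{L^\infty}|\mathbb D_R|^{1/2}\|\nabla v\|_{L^2}$ is a genuine inhomogeneous term that cannot be absorbed at all, so you never reach $v\equiv 0$. No ``flux cancellation'' rescues this: the identity $\partial_i F_i=0$ is just the PDE you started from and gives nothing further once tested against $v$.

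The paper, following \cite[Theorem~2.1.8]{MR3443169}, uses a \emph{different} test function, namely $\varphi_\delta=\dfrac{m_{R,1}^+}{m_{R,1}^++\delta}$ with $m_{R,1}^+=(m_R-M)_+$. The point of this nonlinear choice is that, with $\omega_\delta:=\ln\bigl(1+\delta^{-1}m_{R,1}^+\bigr)$, the elliptic side produces $\int_{\mathbb D_R}|\nabla\omega_\delta|^2$ while the drift side is bounded by $C\int_{\mathbb D_R}|\nabla\omega_\delta|$ --- \emph{linear} rather than quadratic in $\nabla\omega_\delta$, with $C$ independent of $\delta$. Cauchy--Schwarz on the bounded domain then gives $\int|\nabla\omega_\delta|^2\le C$ uniformly in $\delta$, Poincar\'e yields $\int|\omega_\delta|^2\le C$, and letting $\delta\to 0$ forces $m_{R,1}^+=0$. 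This logarithmic test function is the mechanism actually used in \cite{MR3443169}; the linear truncation you propose is precisely the approach that fails for adjoint equations with drift, not the one Bogachev et al.\ carry out.
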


\begin{proof}
From the equation (\ref{2.1}), we know
\begin{equation}\label{2.3}
\partial_{ij}\left(a_{ij} m_R\right)-\partial_{i}\left(b_{i}m_R\right)
=
\partial_i( a_{ij}\partial_j m_R)-\partial_i( b_im_R-\partial_j a_{ij}m_R )
=0 \text{ in }\mathbb{D}_R.
\end{equation}
Note that $$m_{R,1}=:m_R-\max_{ |y_1|=R}g\leq 0 \quad\text{ on } |y_1|=R.$$
Hence, there holds $m_{R,1}^+=:\max\{ m_{R,1},0 \}\in H^1_0(\mathbb{D}_R)$, where 
\begin{equation*}
\begin{aligned}
H_{0}^1( \mathbb{D}_R )=:
\left\{
h\in H^1(\mathbb{D}_R),h(\pm R,y')=0 \text{ for } y'\in \mathbb{T}^{d-1},
\text{ and }
h\text{ is }\mathbb{D} \text{-periodic}
\right\}.
\end{aligned}\end{equation*}
To proceed, multiplying the equation (\ref{2.3}) by $\frac{m_{R,1}^+ }{m_{R,1}^++\delta}$ for any $\delta>0$  and integrating the resulting equation
over $\mathbb{D}_R$ yield that
\begin{equation}\label{2.4}
\displaystyle\int_{\mathbb{D}_R}
\partial_i( a_{ij}\partial_j m_R)\cdot \frac{m_{R,1}^+ }{m_{R,1}^++\delta}
-
\displaystyle\int_{\mathbb{D}_R}
\partial_i( b_im_R-\partial_j a_{ij}m_R )
\cdot \frac{m_{R,1}^+ }{m_{R,1}^++\delta}=0,
\end{equation}
then a direct computation gives
\begin{equation}\label{2.5}
\begin{array}{lll}
\displaystyle\int_{\mathbb{D}_R}
(m_{R,1}^++\delta)^{-2}\partial_i m_{R,1}^+a_{ij}\partial_jm_{R,1}^+
&=
\displaystyle\int_{\mathbb{D}_R}
(b_im_R-\partial_j a_{ij}m_R )\cdot(m_{R,1}^++\delta)^{-2}\cdot\partial_i m_{R,1}^+\\[15pt]
&=
\displaystyle\int_{\mathbb{D}_R}
(b_i-\partial_j a_{ij} )m_{R,1}^+\cdot(m_{R,1}^++\delta)^{-2}\cdot\partial_i m_{R,1}^+\\[15pt]
&\leq
\displaystyle\int_{\mathbb{D}_R}
| b_i-\partial_j a_{ij}  |
\frac{\partial_i m_{R,1}^+}{m_{R,1}^++\delta}.
\end{array}
\end{equation}
Let $\omega_{\delta}=\ln (1+\delta^{-1} m_{R,1}^+)$, then the equality (\ref{2.5}) implies
\begin{equation*}
\mu\displaystyle\int_{\mathbb{D}_R}|\nabla\omega_{\delta}|^2
\leq
C\displaystyle\int_{\mathbb{D}_R}|\nabla\omega_{\delta}|,
\end{equation*}
where $C$ is independent of $\delta>0$. Due to
$\omega_{\delta}\in H^1_0(\mathbb{D}_R)$ and the Poincare inequality, we get
\begin{equation*}
\displaystyle\int_{\mathbb{D}_R}|\omega_{\delta}|^2\leq C.
\end{equation*}
Then letting $\delta\rightarrow0$ in the definition of $\omega_{\delta}$ yields that
\begin{equation*}
m_{R,1}^+=0 \text{ a.e. in }\mathbb{D}_R.
\end{equation*}
 Further, the definitions of $m_{R,1}$ and $m_{R,1}^+$ give
\begin{equation}\label{2.9}
m_{R}\leq\max_{|y_1|=R} g.
\end{equation}
Now,  denote
$$m_{R,2}^+=:\min_{ |y_1|=R}g-m_R\in H^1_0(\mathbb{D}_R).$$
Similar to the computation above, we can obtain
\begin{equation}\label{2.10}
\min_{|y_1|=R}g\leq m_R.
\end{equation}
Therefore, combining \eqref{2.9}-\eqref{2.10} yields that
$$\min_{|y_1|=R}g\leq m_R\leq\max_{|y_1|=R} g,$$
which is the desired estimate.
\end{proof}

 To proceed, choose two cut-off functions $\psi_\pm(y_1)$, such that
\begin{equation}\label{2.6}\begin{aligned}
&\psi_+(y_1)=1,\text{ if }y_1\geq 1,\ \psi_+(y_1)=0,\text{ if }y_1\leq 0;\\[5pt]
&\psi_-(y_1)=1,\text{ if }y_1\leq -1,\ \psi_-(y_1)=0,\text{ if }y_1\geq 0.
\end{aligned}\end{equation}
Now, denote $v=m-q_+m_+\psi_+-q_-m_-\psi_-$. According to \eqref{1.6} and \eqref{1.8},
a direct computation shows that
\begin{equation}\label{2.11}
\begin{aligned}
&\partial_{ij} (a_{ij}v)-\partial_i(b_iv)\\[10pt]
=&
\partial_{ij}(a_{ij}m-a_{ij}q_+m_+\psi_+-a_{ij}q_-m_-\psi_-)
-
\partial_i( b_im-b_iq_+m_+\psi_+-b_iq_-m_-\psi_-)\\[10pt]
=
&\partial_{ij}(a_{ij}m)-\partial_{ij}(a_{ij}q_+m_+\psi_+)
-\partial_{ij}(a_{ij}q_-m_-\psi_- )\\[10pt]
&-\partial_i( b_im)+\partial_i(b_iq_+m_+\psi_+)
+\partial_i(b_iq_-m_-\psi_-)\\[10pt]
=&
-\partial_{ij}(a_{ij}q_+m_+\psi_+)+\partial_i(b_iq_+m_+\psi_+)
-\partial_{ij}(a_{ij}q_-m_-\psi_- )+\partial_i(b_iq_-m_-\psi_-)\\[10pt]
=&
-\partial_i[\partial_j(a_{ij}m_+)\cdot q_+\psi_+]
-\partial_i(a_{ij}m_+q_+\partial_j\psi_+)
+\partial_i(b_im_+)\cdot q_+\psi_+
+b_iq_+m_+\partial_i\psi_+\\[10pt]
&-
\partial_i[\partial_j(a_{ij}m_- )\cdot q_-\psi_-]
-\partial_i(a_{ij}m_-q_-\partial_j\psi_-)
+
\partial_i(b_im_-)q_-\psi_-
+b_iq_-m_-\partial_i\psi_-\\[10pt]
=&
-\partial_j(a_{ij}m_+)\cdot q_+\partial_j\psi_+
-\partial_i(a_{ij}m_+q_+\partial_j\psi_+)
+b_iq_+m_+\partial_i\psi_+\\[10pt]
&-\partial_j(a_{ij}m_-)\cdot q_-\partial_i\psi_-
-\partial_i(a_{ij}m_-q_-\partial_j\psi_-)
+b_iq_-m_-\partial_i\psi_-\\[10pt]
=&:f \text{ in } \mathbb{D}.
\end{aligned}
\end{equation}
From \eqref{2.6} and the regularity of $m_{\pm}$  defined in \eqref{1.6}, we know that
\begin{equation}\label{2.12}
\text{supp}(f)\subset[-1,1]\times \mathbb{T}^{d-1}, \text{ and } f \text{ is uniformly bounded in  } \mathbb{D}.
\end{equation}

Now, in order to prove the existence of the invariant measure $m$ in Theorem \ref{t1.1}, we only need to construct a $\mathbb{D}$-periodic solution $v$ satisfying the equation \eqref{2.11} with the following decay estimates
\begin{equation}\label{***}\left\{\begin{aligned}
&\left(|v|+|\nabla v|\right)(y)\leq C\exp\{-C|y_1|\},\quad {for }\ y_1>1,\\[5pt]
&\left(|v|+|\nabla v|\right)(y)\leq C\exp\{-C|y_1|\},\quad {for }\ y_1<-1.
\end{aligned}\right.\end{equation}

We firstly construct an approximation solution of $v$ in $\mathbb{D}_R$ for $R>1$, stated in Lemma \ref{l2.2}.
To introduce the following Lemma \ref{l2.2}, we give the following definition. For any fixed $R>1$, we define the following Hilbert spaces
\begin{equation*}
L^2(\mathbb{D}_R)=:
\left\{
g\in L^2(\mathbb{D}_R),\ g \text{ is } \mathbb{D} \text{-periodic}
\right\}
\end{equation*}
and
\begin{equation*}
\begin{aligned}
H_{0}^1(\mathbb{D}_R)=:
\left\{g\in H^1(\mathbb{D}_R),g(\pm R,y')=0 \text{ for } y'\in \mathbb{T}^{d-1},\text{ and }
g\text{ is }\mathbb{D} \text{-periodic}\right\}\end{aligned}\end{equation*}
with inner product $\int_{ \mathbb{D}_R}  fg   $ and
$\int_{ \mathbb{D}_R}(fg+\nabla f\cdot\nabla g)$, respectively.

Note that for the following result, if the coefficients are 1-periodic, then one can find a similar proof in
\cite[Chapter 3 Theorem 3.5]{bensoussan2011asymptotic}. However, we provide it for completeness since the coefficients are $\mathbb{D}$-periodic with an interface assumption.
\begin{lemma}\label{l2.2}
Under the conditions in Theorem \ref{t1.1}, for any fixed $R>1$,
there exists a unique $\mathbb{D}$-periodic solution $v_R\in H_{0}^1(\mathbb{D}_R)$  satisfing
\begin{equation}\label{2.13}
\left\{\begin{aligned}
&\partial_{ij}\left(a_{ij} v_R\right)-\partial_{i}\left(b_{i}v_R\right)=f\text{ in }\mathbb{D}_R,\\[5pt]
&v_R=0  \text{ on }\{|y_1|=R\}\times\mathbb{T}^{d-1}.
\end{aligned}
\right.
\end{equation}
Moreover, $v_R$ and $\nabla v_R$ are uniformly bounded in $R>1$.
\end{lemma}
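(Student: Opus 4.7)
My plan has three parts: establish existence and uniqueness via a Fredholm--Lax--Milgram argument combined with Lemma~\ref{l2.1}; reduce the uniform $L^\infty$ bound, via Lemma~\ref{l2.1}, to a bound on the fixed strip $\mathbb{D}_1$ and then establish that bound by a compactness/contradiction argument; and finally derive the uniform $C^1$ bound from local Schauder estimates.

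For existence and uniqueness, I would rewrite \eqref{2.13} in divergence form
\begin{equation*}
\partial_i\!\left(a_{ij}\partial_j v_R-(b_i-\partial_j a_{ij})v_R\right)=f \text{ in }\mathbb{D}_R,\qquad v_R\in H_0^1(\mathbb{D}_R).
\end{equation*}
The associated bilinear form on $H^1_0(\mathbb{D}_R)$ splits as a coercive symmetric part $(u,\phi)\mapsto\int a_{ij}\partial_j u\,\partial_i\phi$ plus a compact (by Rellich) lower-order perturbation, so Fredholm's alternative reduces existence to triviality of the kernel, which is immediate from Lemma~\ref{l2.1} applied with $f=0$ and zero Dirichlet data. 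Existence and uniqueness follow simultaneously.

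To reduce the uniform $L^\infty$ bound to $\mathbb{D}_1$, I would use that $\operatorname{supp}(f)\subset[-1,1]\times\mathbb{T}^{d-1}$ and apply Lemma~\ref{l2.1} on each exterior cylinder $[1,R]\times\mathbb{T}^{d-1}$ and $[-R,-1]\times\mathbb{T}^{d-1}$---where the equation is homogeneous and the data on $\{|y_1|=R\}$ vanishes---to obtain $\|v_R\|_{L^\infty(\mathbb{D}_R)}\leq\|v_R\|_{L^\infty(\{|y_1|=1\}\times\mathbb{T}^{d-1})}$. The remaining bound I would prove by contradiction: if $M_n:=\|v_{R_n}\|_{L^\infty(\mathbb{D}_{R_n})}\to\infty$ along some $R_n\to\infty$, set $w_n:=v_{R_n}/M_n$, which satisfies the same equation with source $f/M_n\to 0$ and has $\|w_n\|_{L^\infty}=1$ attained in $\mathbb{D}_1$. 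Interior Schauder estimates (valid since $A\in C^{1,\beta}$ and $b\in C^{0,\beta}$) give uniform $C^{1,\alpha}_{\mathrm{loc}}(\mathbb{D})$ bounds, so a diagonal subsequence converges in $C^1_{\mathrm{loc}}$ to a $\mathbb{D}$-periodic $w_\infty$ with $\|w_\infty\|_\infty\leq 1$ attained in $\mathbb{D}_1$, solving the homogeneous equation on $\mathbb{D}$. To kill $w_\infty$ I would exploit $b_1\equiv 0$: integrating the equation over $\mathbb{T}^{d-1}$ at each $y_1$ collapses it to the ODE $F_R''=g_R$ on $[-R,R]$ with $F_R(\pm R)=0$, where $F_R(y_1)=\int_{\mathbb{T}^{d-1}}a_{11}v_R\,dy'$ and $g_R(y_1)=\int_{\mathbb{T}^{d-1}}f\,dy'$; the compatibility $\int g_R=0$ (coming from the explicit formula \eqref{2.11} together with \eqref{1.10} and $b_1\equiv 0$) yields $\|F_R\|_{L^\infty}\leq C$ uniformly in $R$ by direct ODE integration, so dividing by $M_n$ gives $\int_{\mathbb{T}^{d-1}}a_{11}w_\infty\equiv 0$. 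On each periodic half-cylinder the substitution $w_\infty=m_\pm\phi_\pm$ (using \eqref{1.6}) transforms the homogeneous equation into a non-divergence elliptic equation for $\phi_\pm$ with no zero-order term; bounded solutions of this $1$-periodic operator admit a limit as $|y_1|\to\infty$, and the identity $\int_{\mathbb{T}^{d-1}}a_{11}m_\pm\phi_\pm\equiv 0$ combined with $a_{11}m_\pm>0$ forces that limit to be $0$. The maximum principle on the half-cylinder with vanishing data at infinity then yields $\phi_\pm\equiv 0$, and Lemma~\ref{l2.1} propagates the vanishing into the interface strip, contradicting $\|w_\infty\|_\infty=1$.

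Once $\|v_R\|_{L^\infty}$ is controlled uniformly in $R$, the uniform $C^1$ bound follows from standard Schauder estimates applied on any fixed ball. The hardest step is the Liouville-type conclusion at the end of the compactness argument, because the homogeneous equation on $\mathbb{D}$ does admit non-trivial bounded $\mathbb{D}$-periodic solutions (indeed, the very invariant measures being constructed in Theorem~\ref{t1.1}!); ruling these out as the limit $w_\infty$ rests precisely on the ODE reduction from the assumption $b_1\equiv 0$ and on the normalization \eqref{1.112} that guarantees the zero-integral property of $f$.
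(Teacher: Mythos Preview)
Your existence and uniqueness argument via Fredholm is essentially the paper's (the paper kills the kernel with the strong maximum principle, you with Lemma~\ref{l2.1}; either works). For the uniform $L^\infty$ bound, however, your route is far more elaborate than needed and has real gaps, while the paper's argument is essentially one line.

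The paper simply undoes the substitution: set $m_R := v_R + q_+m_+\psi_+ + q_-m_-\psi_-$. By the very computation \eqref{2.11} that defines $f$, the function $m_R$ solves the \emph{homogeneous} equation $\partial_{ij}(a_{ij}m_R) - \partial_i(b_im_R) = 0$ in $\mathbb{D}_R$, with boundary data $q_\pm m_\pm$ on $\{y_1 = \pm R\}$. Lemma~\ref{l2.1} then gives $\min_{\mathbb{T}^d}(q_+m_+, q_-m_-) \leq m_R \leq \max_{\mathbb{T}^d}(q_+m_+, q_-m_-)$ uniformly in $R$, and since $q_\pm m_\pm\psi_\pm$ are bounded independently of $R$, so is $v_R$. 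You missed this because you treated $f$ as a generic compactly supported source, forgetting that it was engineered precisely so that undoing the substitution returns a homogeneous problem with explicit bounded boundary data. No compactness, no Liouville theorem, no ODE reduction, and no use of $b_1\equiv 0$ or \eqref{1.112} is needed at this stage.

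Your contradiction argument has two weak points. First, the assertion that a bounded solution $\phi_\pm$ of a $1$-periodic non-divergence equation on a half-cylinder necessarily admits a limit as $|y_1|\to\infty$ is not elementary; it is an oscillation-decay / Liouville-type statement in periodic cylinders that you do not justify. Second, even granting $\phi_+ \to 0$ at infinity, the maximum principle on $(1,\infty)\times\mathbb{T}^{d-1}$ cannot yield $\phi_+\equiv 0$, since you have no control of $\phi_+$ on the finite boundary $\{y_1=1\}$. The natural repair---observe $w_\infty\to 0$ at both $\pm\infty$ and apply Lemma~\ref{l2.1} to $w_\infty$ on $\mathbb{D}_R$ for every $R$---still leaves the first gap open. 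Your final step, deducing the uniform gradient bound from local Schauder once $\|v_R\|_{L^\infty}$ is controlled, is fine and matches the paper.
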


\begin{proof}
Firstly, for any $\lambda>0$ suitably large, we denote
$\mathcal{A}:=a_{ij}\partial_{ij}+b_i\partial_i$ and
consider the equation
\begin{equation}\label{2.14}
\left\{
\begin{aligned}
&\mathcal{A}z_{\lambda}+\lambda z_{\lambda}=g
\text{ in } \mathbb{D}_R,\
\text{ for }g\in L^2(\mathbb{D}_R),\\[8pt]
&z_{\lambda}\in H_{0}^1( \mathbb{D}_R ).
\end{aligned}
\right.
\end{equation}
Furthermore, we have
\begin{equation*}
a_{ij}\partial_{ij}z_{\lambda}+b_i\partial_iz_{\lambda}
+\lambda z_{\lambda}
=
\partial_i(a_{ij}\partial_jz_{\lambda})+(b_i-\partial_ja_{ji} )\partial_iz_{\lambda}+\lambda z_{\lambda}.
\end{equation*}
Then it is easy to check that for any $g\in L^2(\mathbb{D}_R) $ and by using Lax-milgram
Theorem after choosing $\lambda$ suitably large, there exists a unique solution $z_{\lambda}\in H_{0}^1( \mathbb{D}_R )$ satisfying the equation \eqref{2.14}.

Now, for fixed $R>1$,  we define
\begin{equation*}
G_{\lambda}: L^2(\mathbb{D}_R)
\rightarrow L^2(\mathbb{D}_R),\ G_{\lambda} g=:z_{\lambda}.
\end{equation*}
Since $z_{\lambda}\in H_{0}^1( \mathbb{D}_R )$ and the following embedding is compact
\begin{equation*}
H_{0}^1( \mathbb{D}_R)\hookrightarrow\hookrightarrow L^2(\mathbb{D}_R),
\end{equation*}
then $G_{\lambda}$ is compact.
Let $\phi\in L^2(\mathbb{D}_R)$, we claim
the problem
\begin{equation*}\begin{aligned}
\mathcal{A}z=\phi,\quad z\in H_{0}^1( \mathbb{D}_R)
\end{aligned}
\end{equation*}
is equivalent to
\begin{equation*}
(I-\lambda G_{\lambda} )z=G_{\lambda}\phi,\quad z\in H_{0}^1( \mathbb{D}_R).
\end{equation*}
In fact, by the uniqueness of the solution to the equation \eqref{2.14}, we know that
the equation $\mathcal{A}z=\phi$
is equivalent to $G_{\lambda}\mathcal{A}z=G_{\lambda}\phi$. Moreover,
 $G_{\lambda}z$ satisfies
\begin{equation*}
\mathcal{A}G_{\lambda}z+\lambda G_{\lambda}z=z \Leftrightarrow (I-\lambda G_{\lambda} )z=\mathcal{A}G_{\lambda}z.
\end{equation*}
Therefore, noting that $G_{\lambda}=\left(\mathcal{A}+\lambda I\right)^{-1}$ which implies that $G_{\lambda}\mathcal{A}=\mathcal{A}G_{\lambda}$, the problem $G_{\lambda}\mathcal{A}z=G_{\lambda}\phi$ is equivalent to the problem
\begin{equation*}
(I-\lambda G_{\lambda} )z=G_{\lambda}\phi,
\end{equation*}
which completes the proof of the above claim.

Similarly, let $\psi\in L^2(\mathbb{D}_R)$, then the problem
\begin{equation*}\begin{aligned}
\mathcal{A}^*m=\psi,\quad m\in H_{0}^1( \mathbb{D}_R),
\end{aligned}
\end{equation*}
is equivalent to the problem
$$(I-\lambda G^*_{\lambda})m=G^*_{\lambda}\psi,\quad m\in H_{0}^1( \mathbb{D}_R),$$
where $G^*_{\lambda}$ is the adjoint of $G_{\lambda}$
in $L^2(\mathbb{D}_R)$. Since $G_{\lambda}$ is compact and
the Fredholm alternative Theorem implies that we need to find the number of linearly
independent solution of
\begin{equation*}
(I-\lambda G_{\lambda} )z=0,\quad z\in H_{0}^1( \mathbb{D}_R).
\end{equation*}
Namely,
\begin{equation*}
\mathcal{A}z=0,\quad z\in H_{0}^1( \mathbb{D}_R).
\end{equation*}
By regularity theory, we obtain
$z\in H^2( \mathbb{D}_R),\ z\in C^2(\mathbb{D}_R)\cap C^0(\overline{\mathbb{D}_R})$. Suppose that $z\not\equiv0$. By the maximum principle, $z$ cannot reaches its maximum or minimum inside $\mathbb{D}_R$. Note that $z=0$ on $\{y_1=\pm R\}$, so we assume $z$ reach its maximum on the facet $x_i=0(i\neq1)$ and in a point inside the facet.  By the $\mathbb{D}$-periodicity,  we consider the function $z$
on the cube
\begin{equation*}
-\frac{1}{2}<x_i<\frac{1}{2},\ x_1\in(-R,R),\ x_j\in(0,1),\ j\neq1\text{ or } i,
\end{equation*}
then $z$ reaches its maximum inside the cube, which contradicts the strong maximum principle. Hence, $z\equiv0.$

Now, by the  Fredholm alternative Theorem and  $f\in L^2(\mathbb{D}_R)$, there exists a unique $v_R\in H_{0}^1( \mathbb{D}_R)$ satisfying the equation \eqref{2.13}.
Note that we donot know whether $\|v_R\|_{L^2(\mathbb{D})}$ is uniformly bounded in $R$ or not, so the global boundedness regularity estimate cannot be applied. To proceed, denote $m_R=v_R+q_+m_+\psi_++q_-m_-\psi_-$, then due to \eqref{1.6} and \eqref{2.11}, it is easy to verify that $m_R\in H^1(\mathbb{D}_R)$ satisfies
\begin{equation}\label{2.15}
\left\{\begin{aligned}
&\partial_{ij}\left(a_{ij} m_R\right)-\partial_{i}\left(b_{i}m_R\right)=0\text{ in }\mathbb{D}_R,\\[5pt]
&m_R=q_+m_+  \text{ on }\{y_1=R\}\times\mathbb{T}^{d-1};\\
&m_R=q_-m_-  \text{ on }\{y_1=-R\}\times\mathbb{T}^{d-1}.
\end{aligned}
\right.
\end{equation}
Therefore, by Lemma \ref{l2.1}, for any $y\in \mathbb{D}$, there holds
$$\min_{y\in \mathbb{T}^{d}}(q_+m_+,q_-m_-)\leq m_R\leq\max_{y\in \mathbb{T}^{d}}(q_+m_+,q_-m_-),$$
which implies that $v_R$ is uniformly bounded in $R>1$. Moreover, by regularity theory, we know that $\nabla v_R$ is uniformly bounded in $R>1$.  Consequently, we have completed the proof of Lemma \ref{l2.2}.
\end{proof}

Note that, the statement above implies an approximation solution $v_R$ to \eqref{2.11} without the desired decay properties. Now, our aim is to investigate the decay properties of $v_R$ defined in \eqref{2.13}. Firstly, we have the following estimates.

\begin{lemma}\label{l2.3}
Under the conditions in Theorem \ref{t1.1}, for any $R\geq R_1\geq1$ with $v_R$  defined in \eqref{2.13},
we have
\begin{equation}\label{2.21}
\int_{\mathbb{T}^{d-1}\times\{R_1\}}
a_{11}v_R\equiv 0\quad \text{ for any }R_1\in[1,R].
\end{equation}
and
\begin{equation}\label{2.16}
\int_{R_1}^{R}\int_{\mathbb{T}^{d-1}}|\nabla v_R|^2
\leq
C\int_{\mathbb{T}^{d-1}\times\{R_1\}}
\left(|v_R|^2+|\nabla v_R|^2\right)
+
C\int_{R_1}^{R}\int_{\mathbb{T}^{d-1}}|v_R|^2,
\end{equation}where the constant $C$ depends only on the coefficients, $d$ and $q_\pm$.
\end{lemma}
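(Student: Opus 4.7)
The strategy is to prove both claims by integrating the equation in flux form over slabs $(\rho_1, \rho_2) \times \mathbb{T}^{d-1}$ and exploiting the fact that $\mathbb{D}$-periodicity in $y'$ annihilates every tangential divergence; the assumption $b_1 \equiv 0$ is what makes the bookkeeping for the normal flux clean.

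For the cross-section identity \eqref{2.21}, I would work not with $v_R$ directly but with $m_R := v_R + q_+ m_+ \psi_+ + q_- m_- \psi_-$, which by Lemma \ref{l2.2} satisfies the \emph{homogeneous} doubly divergence equation \eqref{2.15} on all of $\mathbb{D}_R$ with Dirichlet data $q_\pm m_\pm$ at $y_1 = \pm R$. Writing the equation as $\partial_i F_i^{(m)} = 0$ with $F_i^{(m)} := \partial_j(a_{ij} m_R) - b_i m_R$ and integrating over slabs, periodicity in $y'$ shows that the cross-sectional integral $\int_{\mathbb{T}^{d-1}\times\{y_1\}} F_1^{(m)}$ is independent of $y_1 \in (-R,R)$. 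Because $b_1 \equiv 0$, and the tangential terms $\sum_{j\geq 2} \partial_j(a_{1j} m_R)$ integrate to zero on $\mathbb{T}^{d-1}$, this constant equals $H'(y_1)$ where $H(y_1) := \int_{\mathbb{T}^{d-1}\times\{y_1\}} a_{11} m_R$. Plugging in the boundary values and using the normalization \eqref{1.112} gives $H(R) = q_+ \int a_{+,11} m_+ = q_- \int a_{-,11} m_- = H(-R)$, which forces $H' \equiv 0$, i.e.\ $H \equiv \kappa$ on $(-R,R)$ for this common value $\kappa$. For any $R_1 \in [1,R]$ one has $\psi_+(R_1) = 1$, $\psi_-(R_1) = 0$ and $a(y) = a_+(y)$, so $\int_{\mathbb{T}^{d-1}\times\{R_1\}} a_{11} v_R = H(R_1) - q_+ \int_{\mathbb{T}^{d-1}\times\{R_1\}} a_{+,11} m_+ = \kappa - \kappa = 0$, establishing \eqref{2.21}.

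For the Caccioppoli-type bound \eqref{2.16}, the support condition \eqref{2.12} combined with $R_1 \geq 1$ yields the homogeneous flux equation $\partial_i F_i = 0$ on $(R_1, R) \times \mathbb{T}^{d-1}$, where $F_i = a_{ij}\partial_j v_R + (\partial_j a_{ij} - b_i) v_R$. Testing against $v_R$ and integrating by parts once, the top boundary $\{y_1 = R\}$ contributes nothing since $v_R$ vanishes there, while the lower cross-section $\{y_1 = R_1\}$ produces a boundary term bounded pointwise by $|F_1 v_R| \leq C(|\nabla v_R|^2 + |v_R|^2)$. The bulk integral $\int F_i \partial_i v_R$ splits into the coercive piece $\int a_{ij}\partial_j v_R \partial_i v_R \geq \mu \int |\nabla v_R|^2$ and a zeroth-order cross term $\int (\partial_j a_{ij} - b_i)\, v_R \partial_i v_R$, which Young's inequality absorbs into $\tfrac{\mu}{2}\int |\nabla v_R|^2 + C\int |v_R|^2$. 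Rearranging delivers \eqref{2.16} precisely.

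The main conceptual obstacle is the flux identity in part one: one cannot test directly with $v_R$ because $v_R$ does not satisfy a homogeneous equation across the interface $\{|y_1|\leq 1\}$, and even after pivoting to $m_R$ the equation only furnishes $H' \equiv c$ with $c$ undetermined a priori. The delicate role of the normalization \eqref{1.112} is exactly to equalize the two boundary values of $H$ and force $c = 0$; without this matching condition the cross-sectional integral of $a_{11} v_R$ would drift linearly in $y_1$, and the exponential decay of $v_R$ built by iterating \eqref{2.16} in the next stage would break down.
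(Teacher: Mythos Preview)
Your proposal is correct and follows essentially the same route as the paper: both arguments pivot to $m_R$ to exploit the homogeneous equation on all of $\mathbb{D}_R$, use periodicity and $b_1\equiv 0$ to reduce the normal flux to $\partial_1\int_{\mathbb{T}^{d-1}} a_{11}m_R$, and invoke the matching condition \eqref{1.112} at the two ends to force this derivative to vanish; the energy estimate \eqref{2.16} is obtained in both cases by testing the homogeneous equation on $(R_1,R)\times\mathbb{T}^{d-1}$ against $v_R$ and absorbing the lower-order cross term via Young's inequality. The only cosmetic difference is that the paper first isolates $\int\partial_1(a_{11}v_R)\equiv 0$ and then integrates from $R$ using $v_R|_{y_1=R}=0$, whereas you show $H\equiv\kappa$ directly and subtract $q_+\int a_{+,11}m_+$.
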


\begin{proof}
According to \eqref{2.12} and \eqref{2.13}, we know that $v_R$ satisfies
\begin{equation}\label{2.17}
\partial_i(a_{ij}\partial_jv_R)
-\partial_i(b_iv_R-\partial_ja_{ij}\cdot v_R)=0, \text{ for }y_1>1.
\end{equation}
Integrating the above equation over $(R_1,R)\times\mathbb{T}^{d-1}$ after noting that $b_1\equiv 0$ implies that
\begin{equation}\label{2.18}
\int_{\mathbb{T}^{d-1}\times\{R\}}\left(a_{1j}\partial_jv_R+\partial_ja_{1j}\cdot v_R\right)
=\int_{\mathbb{T}^{d-1}\times\{R_1\}}\left(a_{1j}\partial_jv_R+\partial_ja_{1j}\cdot v_R\right).
\end{equation}
 Using the $\mathbb{D}$-periodicity, for any $y_1\in [-R,R]$, there holds
\begin{equation}\label{**}
\displaystyle\int_{\mathbb{T}^{d-1}\times\{y_1\}}
\sum_{j=2}^d \partial_j(a_{1j}v_R)=0.
\end{equation}
Using the equality above, the equality \eqref{2.18} is equivalent to
\begin{equation}\label{2.19}
\int_{\mathbb{T}^{d-1}\times\{R\}}\partial_1(a_{11}v_R)
=\int_{\mathbb{T}^{d-1}\times\{R_1\}}\partial_1(a_{11}v_R),
\end{equation}
which implies that for any $R\geq R_1\geq1$, the integral $\int_{\mathbb{T}^{d-1}\times\{R_1\}}\partial_1(a_{11}v_R)$ is a constant.

Now, we aim to prove that this constant is 0. Note that for $y_1>1$, $v_R=m_R-q_+m_+$ with $m_R$ and $m_+$ defined in \eqref{2.15} and \eqref{1.6}, respectively. Now, in view of \eqref{1.10}, we only need to show that 
\begin{equation*}
\int_{\mathbb{T}^{d-1}\times\{y_1\}}
\partial_1(a_{11}m_R)\equiv 0\quad \text{ for any }y_1\in(-R,R).
\end{equation*}
In view of the equation \eqref{2.15} satisfied by $m_R$ and similar as the proof of \eqref{2.19}, for any $y_1\in (-R,R)$, there holds 
$$\int_{\mathbb{T}^{d-1}\times\{y_1\}}\partial_1(a_{11}m_R)\text{ is a constant}.$$

Integrating the above equality with respect to $y_1$ over $(-R,R)$ and using \eqref{1.112} and the boundary conditions \eqref{2.15} yield that 
$$\int_{\mathbb{T}^{d-1}\times\{y_1\}}\partial_1(a_{11}m_R)\equiv 0,$$
which further implies that 
\begin{equation}\label{2.21*}\int_{\mathbb{T}^{d-1}\times\{R_1\}}\partial_1(a_{11}v_R)\equiv 0,\ \text{for any }R_1\in (1,R).\end{equation}

Now, the desired equality \eqref{2.21} follows directly from the equality above and the boundary condition \eqref{2.13} satisfied by $v_R$.

To prove the estimate \eqref{2.16}, for any $R\geq R_1\geq1$, multiplying the equation \eqref{2.17} by $v_R$ and integrating the resulting equation over $(R_1,R)\times\mathbb{T}^{d-1}$, we have
\begin{equation}\label{2.22}
\begin{aligned}
-\int_{R_1}^{R}\int_{\mathbb{T}^{d-1}}
\partial_i(a_{ij}\partial_jv_R)\cdot v_R+\displaystyle\int_{R_1}^{R}\int_{\mathbb{T}^{d-1}}
\partial_i(b_iv_R-\partial_ja_{ij}\cdot v_R)\cdot v_R
=I_1+I_2=0.\end{aligned}
\end{equation}
Using the boundary conditions $\eqref{2.13}_2$,
we calculate $I_1$ and $I_2$ respectively as following,
\begin{equation}\label{2.23}
\begin{aligned}
I_1&=-\int_{R_1}^{R}\int_{\mathbb{T}^{d-1}}
\partial_i(a_{ij}\partial_jv_R)\cdot v_R\\[10pt]
&=\int_{R_1}^{R}\int_{\mathbb{T}^{d-1}}
a_{ij}\partial_jv_R\partial_i v_R
+\int_{\mathbb{T}^{d-1}\times\{R_1\}}
a_{1j}\partial_jv_R\cdot v_R,
\end{aligned}
\end{equation}
and
\begin{equation}\label{2.24}
\begin{aligned}
I_2
=-\int_{R_1}^{R}\int_{\mathbb{T}^{d-1}}
(b_iv_R-\partial_ja_{ij}\cdot v_R)\cdot \partial_iv_R
-\int_{\mathbb{T}^{d-1}\times\{R_1\}}
(-\partial_ja_{1j}\cdot v_R)\cdot v_R,
\end{aligned}
\end{equation}
then inserting \eqref{2.23}-\eqref{2.24} into \eqref{2.22} and performing a direct computation leads to the desired estimate \eqref{2.16}.
\end{proof}

\begin{lemma}\label{l2.4}
Under the conditions in Theorem \ref{t1.1}, there exists a unique $\mathbb{D}$-periodic solution $u_R\in H^1_0((1,R)\times\mathbb{T}^{d-1})$  satisfying
\begin{equation}\label{2.25}
\left\{\begin{aligned}
&a_{ij}\partial_{ij}u_R+b_{i}\partial_iu_R=\frac{a_{11}v_R}{m_+}\text{ in }\{1<y_1<R\}\times \mathbb{T}^{d-1},\\
&u_R=0 \text{ on }\{y_1=\{1\}\cup \{R\}\}\times\mathbb{T}^{d-1}.
\end{aligned}\right.
\end{equation}

\end{lemma}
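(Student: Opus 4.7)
The plan is to follow the same Fredholm-alternative strategy used in the proof of Lemma \ref{l2.2}, now on the smaller cylinder $(1,R)\times\mathbb{T}^{d-1}$ with vanishing Dirichlet data on the two flat faces $\{y_1=1\}$ and $\{y_1=R\}$. Observe first that the right-hand side $f_R:=a_{11}v_R/m_+$ is well-defined and bounded on this cylinder: from \eqref{1.6} we have $m_+\geq\tilde c_1>0$, $a_{11}$ is bounded by \eqref{1.2}, and Lemma \ref{l2.2} yields $v_R\in L^\infty$ uniformly in $R$, so $f_R\in L^2((1,R)\times\mathbb{T}^{d-1})$ with a bound independent of $R$.

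Next I would rewrite the non-divergence operator in the now-familiar form
\begin{equation*}
a_{ij}\partial_{ij}u+b_i\partial_i u=\partial_i(a_{ij}\partial_j u)+(b_i-\partial_j a_{ij})\partial_i u,
\end{equation*}
and, for a parameter $\lambda>0$ to be chosen large, consider the auxiliary problem
\begin{equation*}
a_{ij}\partial_{ij}z_\lambda+b_i\partial_i z_\lambda+\lambda z_\lambda=g\quad\text{in }(1,R)\times\mathbb{T}^{d-1},\qquad z_\lambda\in H_0^1((1,R)\times\mathbb{T}^{d-1}),
\end{equation*}
with $g\in L^2((1,R)\times\mathbb{T}^{d-1})$. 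By the ellipticity in \eqref{1.2} and the $C^{1,\beta}$ regularity of $A$, one checks coercivity of the associated bilinear form for $\lambda$ large enough and invokes the Lax--Milgram theorem to obtain a unique solution $z_\lambda$. This defines a bounded solution operator $G_\lambda:L^2\to L^2$, $g\mapsto z_\lambda$, which is compact by the Rellich--Kondrachov embedding $H_0^1\hookrightarrow\hookrightarrow L^2$.

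Then, repeating the manipulation in Lemma \ref{l2.2}, the original problem \eqref{2.25} is equivalent to the operator equation $(I-\lambda G_\lambda)u=G_\lambda f_R$, so the Fredholm alternative reduces existence and uniqueness to showing that the only $\mathbb{D}$-periodic $z\in H_0^1((1,R)\times\mathbb{T}^{d-1})$ satisfying $\mathcal{A}z=0$ is $z\equiv0$. By standard interior Schauder regularity (available thanks to $A\in C^{1,\beta}$ and $b\in C^{0,\beta}$), such a $z$ lies in $C^2$ in the interior and is continuous up to the boundary. The strong maximum principle then prevents $z$ from attaining an interior extremum on $(1,R)\times\mathbb{T}^{d-1}$; since $z$ vanishes on the two flat faces, the only way an extremum can be attained is on a face $\{y_j=\text{const}\}$ for some $j\neq 1$, and this is handled exactly as in Lemma \ref{l2.2}: by $\mathbb{D}$-periodicity one shifts the torus to put the extremum strictly inside a translated fundamental cube, contradicting the strong maximum principle unless $z\equiv0$.

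The step I expect to require the most care is the uniqueness argument: one has to be precise about why the reflection/periodic-translation trick used in Lemma \ref{l2.2} applies here verbatim, given that only the $y_1$-faces carry Dirichlet data while the remaining directions are periodic; once this is noted, the Fredholm alternative gives a unique $u_R\in H_0^1((1,R)\times\mathbb{T}^{d-1})$ solving \eqref{2.25}, completing the proof.
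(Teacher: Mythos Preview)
Your Fredholm-alternative argument is correct and does establish the lemma as stated. However, the paper takes a genuinely different route that is worth understanding, because it produces a quantitative bound you do not obtain and which is essential downstream.

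Rather than treating the operator as a divergence part plus lower-order drift and invoking Fredholm, the paper \emph{multiplies} the equation by $m_+$ and uses that, on $\{y_1>1\}$, the combination $m_+b_{+,i}-\partial_j(a_{+,ji}m_+)$ is divergence-free with zero mean, hence equals $\partial_j\phi_{+,ji}$ for an antisymmetric flux corrector $\phi_+$. This recasts \eqref{2.25} as the pure divergence-form problem
\[
\partial_i\bigl[(a_{+,ij}m_++\phi_{+,ij})\partial_j u_R\bigr]=a_{+,11}v_R,
\]
to which Lax--Milgram is applied directly on the Hilbert space $\mathcal{H}_R$ with the $\|\nabla\cdot\|_{L^2}$ norm. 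The crucial input is the identity \eqref{2.21} from Lemma~\ref{l2.3}, $\int_{\mathbb{T}^{d-1}\times\{y_1\}}a_{+,11}v_R=0$, which lets one replace $u$ by $u-\fint_{\mathbb{T}^{d-1}}u$ in the pairing and estimate via Poincar\'e in the \emph{transverse} variables $y'$ only. This yields
\[
\|\nabla u_R\|_{L^2((1,R)\times\mathbb{T}^{d-1})}\le C\|v_R\|_{L^2((1,R)\times\mathbb{T}^{d-1})}\le CR^{1/2},
\]
with $C$ \emph{independent of} $R$ (the paper's \eqref{2.28}). Your Fredholm argument gives solvability for each fixed $R$, but the constant produced by the open-mapping/Fredholm machinery has no transparent $R$-dependence; a naive Poincar\'e in $y_1$ would insert a factor of $R$. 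The bound \eqref{2.28} is precisely what is used in Lemma~\ref{l2.6} to control the boundary $C^{1,\alpha}$ seminorm of $u_R$ uniformly (see \eqref{2.294}), so if you follow your route you would still have to supply an independent argument for this estimate later.
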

\begin{proof}
In view of the interface assumptions \eqref{1.3}-\eqref{1.4},
multiplying the equation \eqref{2.25} by $m_+$, a direct computation yields that
\begin{equation*}\label{2.31}
\partial_i\left(a_{+,ij}m_+\partial_{j}u_R\right)+\left(m_+b_{+,i}-\partial_j(a_{+,ji}m_+)\right)\partial_iu_R
=a_{+,11}v_R\text{ in }\{1<y_1<R\}\times \mathbb{T}^{d-1}.
\end{equation*}
Now, in view of the smoothness assumptions \eqref{1.2}-\eqref{1.4} on the coefficients, due to
$$\partial_{i}\left(m_+b_{+,i}-\partial_j(a_{+,ji}m_+)\right)=0 \text{ in }\mathbb{T}^d\text{ and }\int_{\mathbb{T}^d}\left(m_+b_{+,i}-\partial_j(a_{+,ji}m_+)\right)=0,$$
there exists the so-called  flux corrector $\phi_+\in C^{1,\beta}(\mathbb{T}^d)$ \cite[Proposition 3.1]{shen2018periodic}, such that
$$m_+b_{+,i}-\partial_j(a_{+,ji}m_+)=\partial_j \phi_{+,ji}\quad\text{ with }\phi_{+,ij}=-\phi_{+,ji}.$$ 
Then we can rewrite the equation \eqref{2.25} as
\begin{equation}\label{2.26}
\left\{\begin{aligned}
&\partial_i\left[(a_{+,ij}m_++\phi_{+,ij})\partial_{j}u_R\right]=a_{+,11}v_R\text{ in }\{1<y_1<R\}\times \mathbb{T}^{d-1}.\\[6pt]
&u_R=0 \text{ on }\{y_1=\{1\cup R\}\}\times\mathbb{T}^{d-1}.
\end{aligned}\right.
\end{equation}
Now, we need to prove that there exists a unique $\mathbb{D}$-periodic solution $u_R\in H^1_0((1,R)\times\mathbb{T}^{d-1})$  satisfying \eqref{2.26}.
In view of \eqref{2.21}, we know that
\begin{equation}\label{2.27}
\forall R_1\in [1,R],\quad \int_{\mathbb{T}^{d-1}\times \{R_1\}}a_{+,11}v_R\equiv0.
\end{equation}
To proceed, we denote the Hilbert space as
$$\begin{aligned}\mathcal{H}_R=:\left\{u:u\text{ is }\mathbb{D}\text{-periodic},\nabla u\in L^2((1,R)\times\mathbb{T}^{d-1});\ u=0 \text{ on }\{y_1=\{1\cup R\}\times\mathbb{T}^{d-1}\right\}.\end{aligned}$$
Now, for any $u\in \mathcal{H}_R$, due to \eqref{2.27}, a direct computation shows that
$$\begin{aligned}
\left|\int_1^{R}\int_{\mathbb{T}^{d-1}}a_{+,11}v_R\cdot u\right|=&\left|\int_1^{R}\int_{\mathbb{T}^{d-1}}a_{+,11}v_R\cdot \left(u-\fint_{\mathbb{T}^{d-1}\times\{y_1\}}u\right)\right|\\[8pt]
\leq &C\int_1^{R}\left(\int_{\mathbb{T}^{d-1}}|v_R|^2\right)^{1/2}\cdot\left(\int_{\mathbb{T}^{d-1}}|\nabla u|^2\right)^{1/2}\\[8pt]
\leq &C\left(\int_1^{R}\int_{\mathbb{T}^{d-1}}|v_R|^2\right)^{1/2}\cdot\left(\int_1^{R}\int_{\mathbb{T}^{d-1}}|\nabla u|^2\right)^{1/2},
\end{aligned}$$
which implies that $a_{+,11}v_R$ is a linear functional on the Hilbert space $\mathcal{H}_R$. Therefore, it follows from the Lax-Milgram Theorem that there exists a unique solution $u_R$ in $\mathcal{H}_R$ to equation \eqref{2.25}, satisfying 
\begin{equation}\label{2.28}
\|\nabla u_R\|_{L^2((1,R)\times\mathbb{T}^{d-1})}\leq C \|v_R\|_{L^2((1,R)\times\mathbb{T}^{d-1})}\leq CR^{1/2},\end{equation}
where in the last inequality, we have used Lemma \ref{l2.2}.
\end{proof}

To proceed, in view of \eqref{2.16}, we need to control the term $\int_{R_1}^{R}\int_{\mathbb{T}^{d-1}}|v|^2$, which is stated in the following lemma.
\begin{lemma}\label{l2.5}
Under the conditions in Theorem \ref{t1.1},
using the notations in Lemmas \ref{l2.3} and \ref{l2.4}, for any $R_1\in (1,R]$, there holds
\begin{equation}\label{2.29}
\int_{R_1}^{R}\int_{\mathbb{T}^{d-1}}|v_R|^2 \leq C\int_{\mathbb{T}^{d-1}\times\{R_1\}}\left(|v_R|^2+|\nabla v_R|^2+|\nabla u_R|^2\right),
\end{equation}
where the constant $C$ depends only on the coefficients, $d$ and $q_\pm$.
Moreover, there holds 
\begin{equation}\label{2.290}
\int_{1}^{R}\int_{\mathbb{T}^{d-1}}|v_R|^2 \leq C\left|\int_{\mathbb{T}^{d-1}\times\{1\}}a_{11}v_R\partial_{1}u_R\right|.
\end{equation}
\end{lemma}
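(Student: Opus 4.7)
The plan is to test the double-divergence equation satisfied by $v_R$ against the non-divergence solution $u_R$ from Lemma \ref{l2.4}, then invoke the equation for $u_R$ to extract a pointwise lower bound proportional to $\int |v_R|^2$, while the rest becomes boundary terms at $y_1 = R_1$ (the boundary at $y_1 = R$ contributes nothing since both $u_R$ and $v_R$ vanish there). For any $R_1 \in [1,R)$, since $f \equiv 0$ on $\{y_1 > 1\}$ by \eqref{2.12}, I multiply \eqref{2.17} by $u_R$ and integrate over $\Omega_{R_1} := (R_1,R) \times \mathbb{T}^{d-1}$. Integrating $\int_{\Omega_{R_1}} u_R [\partial_{ij}(a_{ij}v_R) - \partial_i(b_i v_R)]$ by parts (twice on the second-order term, once on the first-order term), and using $b_1 \equiv 0$ so that $b_i n_i = 0$ on the horizontal boundary, I obtain
\begin{equation*}
0 = \int_{\Omega_{R_1}} \bigl(a_{ij}\partial_{ij}u_R + b_i\partial_i u_R\bigr) v_R - \int_{\mathbb{T}^{d-1}\times\{R_1\}} u_R\,\partial_j(a_{1j}v_R) + \int_{\mathbb{T}^{d-1}\times\{R_1\}} \partial_i u_R \cdot a_{i1} v_R.
\end{equation*}
Substituting the equation \eqref{2.25} for $u_R$ and the uniform bound $a_{11}/m_+ \geq c > 0$ yields
\begin{equation*}
c \int_{\Omega_{R_1}} |v_R|^2 \leq \Bigl| \int_{\mathbb{T}^{d-1}\times\{R_1\}} u_R\,\partial_j(a_{1j}v_R)\,-\,\int_{\mathbb{T}^{d-1}\times\{R_1\}} \partial_i u_R \cdot a_{i1}v_R \Bigr|.
\end{equation*}

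To obtain \eqref{2.290}, I specialize to $R_1 = 1$. Since $u_R \equiv 0$ on $\{y_1 = 1\} \times \mathbb{T}^{d-1}$, both $u_R$ itself and all its tangential derivatives $\partial_j u_R$ ($j \geq 2$) vanish on that slice, so only the $\partial_1 u_R \cdot a_{11} v_R$ contribution survives, giving \eqref{2.290} directly.

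To obtain \eqref{2.29}, I need to bound the surface terms at $y_1 = R_1 > 1$ by $\int_{\mathbb{T}^{d-1}\times\{R_1\}}(|v_R|^2+|\nabla v_R|^2+|\nabla u_R|^2)$ — crucially without any $|u_R|^2$ term on the right-hand side. The term $\int \partial_i u_R \cdot a_{i1} v_R$ and the tangential pieces $\int u_R\,\partial_j(a_{1j}v_R)$ for $j \geq 2$ are easily handled: for the latter, I integrate by parts in the periodic direction $j$ to move the derivative onto $u_R$, then apply Cauchy--Schwarz with weighted Young's inequality. The main obstacle is the remaining normal term $\int_{\mathbb{T}^{d-1}\times\{R_1\}} u_R\,\partial_1(a_{11}v_R)$, where I cannot integrate by parts in $y_1$ without leaving the slice. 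This is where I invoke \eqref{2.21*}, which gives $\int_{\mathbb{T}^{d-1}\times\{R_1\}} \partial_1(a_{11}v_R) = 0$, so I may replace $u_R$ by $u_R - \bar{u}_R$ (its average over the slice $\mathbb{T}^{d-1}$) at no cost. The Poincar\'e inequality on the torus then gives $\|u_R - \bar{u}_R\|_{L^2(\mathbb{T}^{d-1}\times\{R_1\})} \leq C \|\nabla_{y'} u_R\|_{L^2(\mathbb{T}^{d-1}\times\{R_1\})}$, after which Cauchy--Schwarz and Young deliver the required bound. Combining all pieces and absorbing the ellipticity constant completes the proof of \eqref{2.29}.
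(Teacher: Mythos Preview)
Your proof is correct and follows essentially the same approach as the paper: test the double-divergence equation for $v_R$ against $u_R$, integrate by parts twice, use equation \eqref{2.25} to produce the coercive term $\int a_{11}v_R^2/m_+$, and handle the boundary term involving $u_R$ via the mean-zero property \eqref{2.21*} together with Poincar\'e on the torus. The only cosmetic difference is that the paper treats the full boundary term $\int_{\{R_1\}} \partial_j(a_{1j}v_R)\,u_R$ at once (using both \eqref{**} and \eqref{2.21*} to justify subtracting $\fint u_R$), whereas you split off the tangential pieces $j\ge 2$ by an integration by parts in $y'$ and reserve the mean-subtraction trick for $j=1$; both routes give the same bound.
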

\begin{proof}For any $R_1\in [1,R]$,
multiplying the equation \eqref{2.13} by $u_R$ defined in \eqref{2.25} and integrating the resulting equation over $(R_1,R)\times \mathbb{T}^{d-1}$ yields that
\begin{equation}\label{2.292}
\int_{R_1}^{R}\int_{\mathbb{T}^{d-1}}\partial_{ij}(a_{ij}v_R)\cdot u_R
-\int_{R_1}^{R}\int_{\mathbb{T}^{d-1}}\partial_i(b_iv_R)\cdot u_R=0.\end{equation}
A direct computation shows that
$$\begin{aligned}&\text{the first term}\\[6pt]
=&-\int_{R_1}^{R}\int_{\mathbb{T}^{d-1}}\partial_{j}(a_{ij}v_R)\cdot \partial_{i}u_R-\int_{\mathbb{T}^{d-1}\times\{R_1\}}\partial_{j}(a_{1j}v_R)\cdot u_R\\[6pt]
=&\int_{R_1}^{R}\int_{\mathbb{T}^{d-1}}a_{ij}v_R\cdot \partial_{ij}u_R+\int_{\mathbb{T}^{d-1}\times\{R_1\}}a_{i1}v_R\cdot \partial_{i}u_R-\int_{\mathbb{T}^{d-1}\times\{R_1\}}\partial_{j}(a_{1j}v_R)\cdot u_R,\\
\end{aligned}$$
and
$$\begin{aligned}\text{the second term}=\int_{R_1}^{R}\int_{\mathbb{T}^{d-1}}b_iv_R\cdot \partial_i u_R,
\end{aligned}$$
where we have used $b_1\equiv 0$ in the equality above.
Now, according to \eqref{**} and \eqref{2.21*}, the above three equalities implies that
$$\begin{aligned}
0=&\int_{R_1}^{R}\int_{\mathbb{T}^{d-1}}v_R\left(a_{ij} \partial_{ij}u_R+b_i\partial_i u_R\right)+\int_{\mathbb{T}^{d-1}\times\{R_1\}}a_{i1}v_R \partial_{i}u_R\\
&\quad\quad\quad\quad\quad\quad\quad\quad\quad\quad\quad\quad\quad\quad
-\int_{\mathbb{T}^{d-1}\times\{R_1\}}\partial_{j}(a_{1j}v_R)\cdot  u_R\\
=&\int_{R_1}^{R}\int_{\mathbb{T}^{d-1}} \frac{a_{+,11}v_R^2}{m_+}-C\int_{\mathbb{T}^{d-1}\times\{R_1\}}|v_R|\cdot |\nabla u_R|\\
&\quad\quad\quad\quad\quad\quad\quad\quad\quad-\int_{\mathbb{T}^{d-1}\times\{R_1\}}\partial_{j}(a_{1j}v_R)\cdot \left(u_R-\fint_{\mathbb{T}^{d-1}\times\{R_1\}}u_R\right)\\
\geq & C\int_{R_1}^{R}\int_{\mathbb{T}^{d-1}} v_R^2-C\int_{\mathbb{T}^{d-1}\times\{R_1\}}\left(|v_R|^2+|\nabla v_R|^2+|\nabla u_R|^2\right),
\end{aligned}$$
which is the desired estimate \eqref{2.29}. To see the estimate \eqref{2.290}, letting $R_1=1$ in \eqref{2.292} and using the boundaries conditions \eqref{2.13} and \eqref{2.25}, a direct computation shows that
$$\begin{aligned}\text{the first term}
=&-\int_{1}^{R}\int_{\mathbb{T}^{d-1}}\partial_{j}(a_{ij}v_R)\cdot \partial_{i}u_R\\[6pt]
=&\int_{1}^{R}\int_{\mathbb{T}^{d-1}}a_{ij}v_R\cdot \partial_{ij}u_R+\int_{\mathbb{T}^{d-1}\times\{1\}}a_{11}v_R\cdot \partial_{1}u_R,\\
\end{aligned}$$
and
$$\begin{aligned}\text{the second term}=\int_{1}^{R}\int_{\mathbb{T}^{d-1}}b_iv_R\cdot \partial_i u_R.
\end{aligned}$$
Now the desired estimate \eqref{2.290} directly follows from the two equalities above and \eqref{2.292}.
\end{proof}
\begin{lemma}\label{l2.6}
Under the conditions in Lemma \ref{l2.5}, the integral $\int_{1}^{R}\int_{\mathbb{T}^{d-1}}|v_R|^2$ is uniformly bounded in $R>1$.
\end{lemma}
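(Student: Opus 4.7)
The plan is to start from estimate \eqref{2.290} of Lemma \ref{l2.5}, which reduces the problem to bounding the boundary integral
$$I_R := \Bigl|\int_{\mathbb{T}^{d-1}\times\{1\}} a_{11}v_R\,\partial_1 u_R\Bigr|$$
uniformly in $R>1$. Since $v_R$ is uniformly bounded by Lemma \ref{l2.2}, the norm $\|v_R(1,\cdot)\|_{L^2(\mathbb{T}^{d-1})}$ is bounded; moreover \eqref{2.21} gives $\int_{\mathbb{T}^{d-1}\times\{1\}}a_{11}v_R = 0$. Subtracting the tangential average $\overline{\partial_1 u_R}(1) = \fint_{\mathbb{T}^{d-1}}\partial_1 u_R(1,\cdot)$ from $\partial_1 u_R$ (which does not change $I_R$) and applying Cauchy--Schwarz together with the Poincaré inequality on $\mathbb{T}^{d-1}$ yields
$$I_R \leq C\,\bigl\|\nabla_{y'}\partial_1 u_R\bigr\|_{L^2(\mathbb{T}^{d-1}\times\{1\})}.$$

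Next, I would invoke the standard trace inequality on the strip $(1,2)\times\mathbb{T}^{d-1}$ together with the boundary $H^2$ elliptic regularity for \eqref{2.25} (noting that $u_R$ vanishes on the flat portion $\{y_1=1\}$ of the boundary, that the right-hand side $a_{+,11}v_R/m_+$ is uniformly bounded by Lemma \ref{l2.2} and the regularity of $m_+$ from \eqref{1.6}), to deduce
$$\bigl\|\nabla_{y'}\partial_1 u_R\bigr\|_{L^2(\{y_1=1\})}^2 \leq C\,\|u_R\|_{H^2((1,2)\times\mathbb{T}^{d-1})}^2 \leq C\bigl(1+\|u_R\|_{L^2((1,3)\times\mathbb{T}^{d-1})}^2\bigr).$$

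The main obstacle is now a uniform-in-$R$ bound on $\|u_R\|_{L^2((1,3)\times\mathbb{T}^{d-1})}$: the a priori estimate \eqref{2.28} only yields $\|\nabla u_R\|_{L^2((1,R))}\leq CR^{1/2}$, and Poincaré in $y_1$ using $u_R(1,\cdot)=0$ only transfers the $R$-dependence to $u_R$ itself. To overcome this, I would crucially exploit the zero tangential-mean condition \eqref{2.27}: for each fixed $y_1\in(1,R)$, solving the Poisson problem $\Delta_{y'}H^R(y_1,\cdot) = a_{+,11}v_R(y_1,\cdot)$ on $\mathbb{T}^{d-1}$ produces a uniformly bounded $\nabla_{y'}H^R$ (using the $C^{1,\beta}$ regularity of $v_R$ inherited from Lemma \ref{l2.2} via the equation). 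Substituting this into \eqref{2.26} turns the equation for $u_R$ into a pure divergence-form elliptic equation with uniformly bounded flux data, zero Dirichlet data on $\{y_1=1,R\}$, and coefficients periodic in $y'$. A Caccioppoli estimate combined with either a De Giorgi--Moser iteration or a Fourier decomposition in $y'$ (where the zero mode satisfies a trivial ODE with two zero endpoints while the non-zero modes enjoy exponential Green-function decay) then provides the desired uniform $L^\infty$ bound on $u_R$ on any fixed compact subset of $(1,\infty)\times\mathbb{T}^{d-1}$, and in particular on $(1,3)\times\mathbb{T}^{d-1}$. Chaining the estimates back gives the uniform bound $\int_1^R\int|v_R|^2\leq C$, completing the proof.
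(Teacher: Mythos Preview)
Your reduction via \eqref{2.290} and the Poisson trick of writing $a_{+,11}v_R=\operatorname{div}_{y'}\tilde F$ with $\tilde F$ uniformly bounded is exactly what the paper does. The divergence is in how you then control the boundary term, and there is a genuine gap in your step~8.

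You need a uniform-in-$R$ bound on $\|u_R\|_{L^2((1,3)\times\mathbb{T}^{d-1})}$, but none of the tools you list delivers it. De~Giorgi--Moser local boundedness gives $\|u_R\|_{L^\infty}$ \emph{in terms of} $\|u_R\|_{L^2}$, so it presupposes what you are trying to prove; Caccioppoli likewise controls $\nabla u_R$ in terms of $u_R$. The Fourier argument fails because the coefficients $a_{+,ij}m_++\phi_{+,ij}$ depend on $y'$: integrating the equation over $\mathbb{T}^{d-1}$ gives $\partial_1\!\int_{\mathbb{T}^{d-1}}(a_{+,1j}m_++\phi_{+,1j})\partial_j u_R=0$, which is \emph{not} an ODE for the zero mode $\bar u_R(y_1)$ alone---it still couples to the full $\nabla u_R$. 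So the ``trivial ODE with two zero endpoints'' claim is false, and the non-zero modes do not decouple either. Using $u_R(1,\cdot)=0$ and Poincar\'e in $y_1$ only gives $\|u_R\|_{L^2((1,3))}\le C\|\nabla u_R\|_{L^2((1,3))}$, and a local bound on the latter is also unavailable from \eqref{2.28}.

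The paper sidesteps this entirely. After rewriting \eqref{2.26} with right-hand side $\operatorname{div}\tilde F$, it applies the \emph{rescaled} boundary $C^{1,\alpha}$ Schauder estimate on the large domain $(1,R/20)\times\mathbb{T}^{d-1}$:
\[
[\nabla u_R]_{C^{0,\beta}((1,R/20)\times\mathbb{T}^{d-1})}\le CR^{-\beta}\Bigl(\fint_{(1,R/20)\times\mathbb{T}^{d-1}}|\nabla u_R|^2\Bigr)^{1/2}+C[\tilde F]_{C^{0,\beta}}.
\]
The crucial point is that only the \emph{seminorm} of $\nabla u_R$ appears on the left and only the \emph{average} of $|\nabla u_R|^2$ (not of $|u_R|^2$) on the right. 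By \eqref{2.28}, $\int_1^R|\nabla u_R|^2\le CR$, so the average is $O(1)$ and the first term is $O(R^{-\beta})$. Hence $[\nabla u_R]_{C^{0,\beta}}$ is uniformly bounded, which means $|\partial_1 u_R(1,y')-\partial_1 u_R(1,y'_0)|\le C$ for all $y',y'_0\in\mathbb{T}^{d-1}$. Combined with $\int_{\mathbb{T}^{d-1}\times\{1\}}a_{11}v_R=0$ from \eqref{2.21}, this bounds $I_R$ directly---no control of $u_R$ itself is ever needed. Your route through $H^2$ regularity forces you to estimate $u_R$, which is precisely the quantity the paper manages to avoid.
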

\begin{proof}In view of \eqref{2.290},
we only need to show that 
$$\left|\int_{\mathbb{T}^{d-1}\times\{1\}}a_{11}v_R\partial_{1}u_R\right|\text{ is uniformly bounded in }R>1.$$
To begin the proof, we need to rewrite the source term in \eqref{2.26} in a suitable form. In view of \eqref{2.27}, for any $R\geq y_1\geq 1$, the following equation is solvable by viewing $y_1$ as a parameter:
\begin{equation*}\left\{\begin{aligned}
&\Delta_{y'}F=\sum_{i=2}^{d}\partial_{ii}F=a_{+,11}v_R \quad \text{ in }\mathbb{T}^{d-1};\\
&F \text{ is 1-periodic in }y',\ \int_{\mathbb{T}^{d-1}}F(y_1,\cdot)dy'=0.
\end{aligned}\right.\end{equation*}
Note that $a_{+,11}v_R\in C^{1,\beta}\left((1,R)\times \mathbb{T}^{d-1}\right)$ is  uniformly bounded in $R>1$, then 
by regularity theory, $F\in C^{1,\beta}\left((1,R)\times \mathbb{T}^{d-1}\right)$ is  uniformly bounded in $R>1$. 
Denoting $\tilde{F}=:(0,\partial_2 F,\partial_3F,\cdots,\partial_d F)\in C^{0,\beta}\left((1,R)\times 
\mathbb{T}^{d-1}\right)$, there holds $a_{+,11}v_R=\operatorname{div}\tilde{F}$ \text{ in }$(1,R)\times 
\mathbb{T}^{d-1}$. In view of \eqref{2.26}, $u_R$ satisfies 
\begin{equation*}
\left\{\begin{aligned}
&\partial_i\left[(a_{+,ij}m_++\phi_{+,ij})\partial_{j}u_R\right]=\operatorname{div}\tilde{F}\text{ in }\{1<y_1<R\}\times \mathbb{T}^{d-1}.\\[6pt]
&u_R=0 \text{ on }\{y_1=\{1\cup R\}\}\times\mathbb{T}^{d-1}.
\end{aligned}\right.
\end{equation*}
Now, for $R\geq 100$, using the boundary $C^{1,\alpha}$ estimates, \eqref{2.28} and the $\mathbb{D}$-periodicity, there holds 
$$[\nabla u_R]_{C^{0,\beta}((1,R/20)\times \mathbb{T}^{d-1})}\leq CR^{-\beta}\left(\fint_{(1,R/20)\times \mathbb{T}^{d-1}}|\nabla u_R|^2 \right)^{1/2}+C[\tilde{F}]_{C^{0,\beta}((1,R/10)\times \mathbb{T}^{d-1})},$$
which immediately implies that
\begin{equation}\label{2.294}
[\nabla u_R]_{C^{0,\beta}((1,R/20)\times \mathbb{T}^{d-1})}\text{ is  uniformly bounded in }R>1.
\end{equation}
Note that in \eqref{2.294}, we have used the following notation:
$$[u]_{C^{0, \alpha}(\Omega)} =\sup _{\substack{x,y \in \Omega \\
x \neq y}} \frac{|u(x)-u(y)|}{|x-y|^{\alpha}},\ \|u\|_{C^{0,\alpha}(\Omega)}=[u]_{C^{0, \alpha }(\Omega)}+\|u\|_{L^\infty(\Omega)} \text{ for }\alpha\in (0,1].$$
In view of\eqref{2.21}, there holds
\begin{equation}\label{2.296}
\int_{\mathbb{T}^{d-1}\times\{1\}}
a_{11}v_R= 0.
\end{equation}
Now, due to \eqref{2.294} and for any $y'_0\in \mathbb{T}^{d-1}$, there holds 
$$\max_{y'\in \mathbb{T}^{d-1}}|\partial_1 u_R(1,y')-\partial_1u_R(1,y'_0)| \text{ is uniformly bounded in }R>1.$$
 To proceed, due to \eqref{2.296} and the inequality above, a direct computation shows that 
$$\begin{aligned}
\left|\int_{\mathbb{T}^{d-1}\times\{1\}}a_{11}v_R\partial_{1}u_R\right|
=&\left|\int_{\mathbb{T}^{d-1}\times\{1\}}a_{11}v_R\left(\partial_{1}u_R-\partial_1u_R(1,y'_0)\right)\right|\\[6pt]
\leq& C,
\end{aligned}$$
which completes the proof of this lemma.
\end{proof}

\noindent
To proceed, by Lemma \ref{l2.3} and Lemma \ref{l2.6}, there holds
\begin{equation*}
\int_{1}^{R}\int_{\mathbb{T}^{d-1}}\left(|\nabla v_R|^2+| v_R|^2\right) \text{ is uniformly bounded in }R>1.
\end{equation*}
Similarly, there also holds 
\begin{equation*}
\int_{-R}^{-1}\int_{\mathbb{T}^{d-1}}\left(|\nabla v_R|^2+| v_R|^2\right) \text{ is uniformly bounded in }R>1.
\end{equation*}
Now, we denote
\begin{equation*}
\tilde{v}_R=v_R, \text{ for  }y\in \mathbb{D}_R;\quad \tilde{v}_R=0, \text{ for }y\in \mathbb{D}\setminus\mathbb{D}_R.
\end{equation*}
By the two inequalities above and Lemma \ref{l2.2}, we know that $\tilde{v}_R$ is uniformly bounded in $H^1(\mathbb{D})$ in $R>1$. Then there exists a $\mathbb{D}$-periodic function $v\in H^1(\mathbb{D})$ such that
$$\tilde{v}_R\rightharpoonup v \text{ weakly in } H^1(\mathbb{D}).$$
Moreover, it is easy to verify that $v$ satisfies
 \begin{equation}\label{2.31}
\partial_{ij}\left(a_{ij} v\right)-\partial_{i}\left(b_{i}v\right)=f\text{ in }\mathbb{D},\ v\in H^1(\mathbb{D}).
\end{equation}
Now, for the solution $u_R$ defined in \eqref{2.25} with $R\rightarrow+\infty$, we have the following estimate:

\begin{lemma}\label{l2.7}
Under the conditions in Theorem \ref{t1.1}, there exists a unique $\mathbb{D}$-periodic solution u with $\nabla u\in L^2((1,+\infty)\times\mathbb{T}^{d-1})$  satisfying
\begin{equation}\label{2.32}
\left\{\begin{aligned}
&a_{ij}\partial_{ij}u+b_{i}\partial_iu=\frac{a_{11}v}{m_+}\text{ in }\{1<y_1<+\infty\}\times \mathbb{T}^{d-1},\\
&u=0 \text{ on }\{y_1=1\}\times\mathbb{T}^{d-1};u\rightarrow0 \text{ as }y_1\rightarrow +\infty.
\end{aligned}\right.
\end{equation}
Moreover, for any $1<R_1<+\infty$, there holds
\begin{equation}\label{2.33}
\displaystyle\int_{R_1}^{+\infty}\int_{\mathbb{T}^{d-1}}|\nabla u|^2
\leq
C\int_{\mathbb{T}^{d-1}\times\{R_1\}}
|\nabla u|^2
+
C\int_{R_1}^{+\infty}\int_{\mathbb{T}^{d-1}}|v|^2,
\end{equation}where the constant $C$ depends only on the coefficients, $d$ and $q_\pm$.

\end{lemma}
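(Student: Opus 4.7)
My plan is to construct $u$ via the Lax-Milgram theorem on the half-infinite strip, after converting \eqref{2.32} into the divergence form introduced in Lemma \ref{l2.4}. Multiplying \eqref{2.32} by $m_+$ and using the same flux corrector $\phi_+\in C^{1,\beta}(\mathbb{T}^d)$ with $\phi_{+,ij}=-\phi_{+,ji}$ yields
\begin{equation*}
\partial_i\bigl[(a_{+,ij}m_++\phi_{+,ij})\partial_ju\bigr]=a_{+,11}v\text{ in }(1,\infty)\times\mathbb{T}^{d-1},\qquad u(1,\cdot)=0.
\end{equation*}
I then work on the Hilbert space
\begin{equation*}
\mathcal{H}=\Bigl\{w\in H^1_{\mathrm{loc}}((1,\infty)\times\mathbb{T}^{d-1}):w\text{ is }\mathbb{D}\text{-periodic},\ \nabla w\in L^2,\ w(1,\cdot)=0\Bigr\}
\end{equation*}
equipped with inner product $\langle w_1,w_2\rangle=\int\nabla w_1\cdot\nabla w_2$; the Dirichlet condition at $y_1=1$ rules out nonzero constants, so this genuinely defines a norm, and completeness follows from a standard local Poincar\'e argument plus $w(y_1,\cdot)=\int_1^{y_1}\partial_1 w(s,\cdot)\,ds$.

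Next, I would apply Lax-Milgram with bilinear form $B(u,w)=\int(a_{+,ij}m_++\phi_{+,ij})\partial_ju\,\partial_iw$ and linear functional $L(w)=-\int a_{+,11}vw$. Coercivity on $\mathcal{H}$ is immediate: antisymmetry of $\phi_{+,ij}$ kills its contribution to $B(u,u)$, and the lower bound $m_+\geq\tilde c_1>0$ from \eqref{1.6} yields $B(u,u)\geq c\|\nabla u\|_{L^2}^2$. For boundedness of $L$, I note that $v\in L^2((1,\infty)\times\mathbb{T}^{d-1})$ follows from Lemma \ref{l2.6} combined with weak lower semicontinuity applied to $\tilde v_R\rightharpoonup v$, while the slice compatibility $\int_{\mathbb{T}^{d-1}\times\{y_1\}}a_{+,11}v=0$ for $y_1>1$ is inherited from \eqref{2.21} via the limit, so that $L(w)=-\int a_{+,11}v(w-\bar w(y_1))$ with $\bar w(y_1)=\fint_{\mathbb{T}^{d-1}}w(y_1,\cdot)\,dy'$, and slicewise Poincar\'e on $\mathbb{T}^{d-1}$ yields $|L(w)|\leq C\|v\|_{L^2}\|\nabla w\|_{L^2}$. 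Lax-Milgram then produces a unique $u\in\mathcal{H}$; interior Schauder theory upgrades $u$ to a classical $C^{2,\beta}_{\mathrm{loc}}$ solution, and $u\to 0$ as $y_1\to+\infty$ follows in an averaged $L^2$-sense from $\nabla u\in L^2$, slicewise Poincar\'e applied to $u-\bar u(y_1)$, and an ODE argument for the mean $\bar u(y_1)$ obtained by averaging \eqref{2.32} over $\mathbb{T}^{d-1}$.

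To prove \eqref{2.33}, I would multiply the divergence-form equation by $u$ and integrate over $(R_1,M)\times\mathbb{T}^{d-1}$ for large $M$. After integration by parts, $\phi_+$ drops out by antisymmetry, the interior term is coercive, and the source term is dominated by $\varepsilon\int|\nabla u|^2+C_\varepsilon\int|v|^2$ via the same compatibility and slicewise Poincar\'e trick. The remaining pieces are boundary contributions at $y_1=R_1$ and $y_1=M$. The one at $R_1$, namely $-\int_{\mathbb{T}^{d-1}\times\{R_1\}}(a_{+,1j}m_++\phi_{+,1j})\partial_ju\cdot u$, produces the $C\int_{\mathbb{T}^{d-1}\times\{R_1\}}|\nabla u|^2$ term in \eqref{2.33} after Cauchy-Schwarz together with a trace and slice Poincar\'e estimate that adjusts $u$ by its slice mean. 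The main obstacle will be disposing of the boundary term at $M$: since a priori only $\nabla u\in L^2((1,\infty)\times\mathbb{T}^{d-1})$ is known, I would pick a sequence $M_n\to\infty$ along which $\int_{\mathbb{T}^{d-1}\times\{M_n\}}|\nabla u|^2\to 0$ (possible because this quantity is integrable in $y_1$), separately control the mean $\bar u(M_n)$ using the averaged ODE derived from \eqref{2.32} together with the $L^1(y_1)$-integrability of $\int_{\mathbb{T}^{d-1}}|v|^2$, and then pass to the limit along $M_n$ to recover \eqref{2.33}.
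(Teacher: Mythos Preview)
Your proposal is correct and follows essentially the same route as the paper: rewrite \eqref{2.32} in the divergence form $\partial_i[(a_{+,ij}m_++\phi_{+,ij})\partial_ju]=a_{+,11}v$, apply Lax--Milgram on the half-strip using the slice compatibility $\int_{\mathbb{T}^{d-1}\times\{y_1\}}a_{+,11}v=0$, and then test the equation with $u$ on $(R_1,\infty)\times\mathbb{T}^{d-1}$ to obtain \eqref{2.33}. The one place where the paper is tidier is that it first records the flux identity $\int_{\mathbb{T}^{d-1}\times\{R_1\}}(a_{+,1j}m_++\phi_{+,1j})\partial_ju\equiv 0$ (obtained by integrating the equation over $(R_1,\infty)\times\mathbb{T}^{d-1}$ and using $\nabla u\in L^2$); this single identity both justifies your ``adjust $u$ by its slice mean'' step at $y_1=R_1$ and eliminates the need for your sequence $M_n\to\infty$ and separate ODE control of $\bar u(M_n)$.
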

\begin{proof}Same as the explanation in Lemma \ref{l2.4}, we only need to prove the existence and uniqueness of the solution to the following equation:
\begin{equation}\label{2.34}
\left\{\begin{aligned}
&\partial_i\left[(a_{+,ij}m_++\phi_{+,ij})\partial_{j}u\right]=a_{+,11}v\text{ in }\{1<y_1<+\infty\}\times \mathbb{T}^{d-1}.\\[6pt]
&u=0 \text{ on }\{y_1=1\}\times\mathbb{T}^{d-1};u\rightarrow0 \text{ as }y_1\rightarrow +\infty.
\end{aligned}\right.
\end{equation}
To proceed, similar as the computation of \eqref{2.21}, there holds
\begin{equation}\label{2.35}
\forall R_1\in (1,+\infty),\quad \int_{\mathbb{T}^{d-1}\times \{R_1\}}a_{+,11}v\equiv0.
\end{equation}
Moreover, we denote the Hilbert space as
$$\begin{aligned}\mathcal{H}=:&\left\{u:u\text{ is }\mathbb{D}\text{-periodic},\nabla u\in L^2((1,+\infty)\times\mathbb{T}^{d-1});\ u=0 \text{ on }\{y_1=1\}\times\mathbb{T}^{d-1},\right.\\
&\left.\quad u\rightarrow0 \text{ as }y_1\rightarrow +\infty\right\}.\end{aligned}$$
Now, for any $u\in \mathcal{H}$, there holds
$$\begin{aligned}
\left|\int_1^{+\infty}\int_{\mathbb{T}^{d-1}}a_{+,11}v\cdot u\right|=&\left|\int_1^{+\infty}\int_{\mathbb{T}^{d-1}}a_{+,11}v\cdot \left(u-\fint_{\mathbb{T}^{d-1}\times\{y_1\}}u\right)\right|\\[8pt]
\leq &C\int_1^{+\infty}\left(\int_{\mathbb{T}^{d-1}}|v|^2\right)^{1/2}\cdot\left(\int_{\mathbb{T}^{d-1}}|\nabla u|^2\right)^{1/2}\\[8pt]
\leq &C\left(\int_1^{+\infty}\int_{\mathbb{T}^{d-1}}|v|^2\right)^{1/2}\cdot\left(\int_1^{+\infty}\int_{\mathbb{T}^{d-1}}|\nabla u|^2\right)^{1/2},
\end{aligned}$$
which implies that $a_{+,11}v$ is a linear functional on the Hilbert space $\mathcal{H}$. Therefore, it follows from the Lax-Milgram Theorem that there exists a unique $\mathbb{D}$-periodic solution $u$ to the equation \eqref{2.34} in $\mathcal{H}$ satisfying 
\begin{equation}\label{2.36}
\|\nabla u\|_{L^2((1,+\infty)\times\mathbb{T}^{d-1})}\leq C \|v\|_{L^2((1,+\infty)\times\mathbb{T}^{d-1})}.\end{equation}

Now, we need to prove the estimate \eqref{2.33}. For any $R_1\in (1,+\infty)$, integrating the equation \eqref{2.34} over $(R_1,+\infty)\times\mathbb{T}^{d-1}$ yields that
\begin{equation}\label{2.37}\begin{aligned}
\int_{\mathbb{T}^{d-1}\times\{R_1\}}(a_{+,1j}m_++\phi_{+,1j})\partial_{j}u
\equiv 0,
\end{aligned}\end{equation}
where the other boundary integral vanishes due to $\nabla u\in L^2((1,+\infty)\times\mathbb{T}^{d-1})$.

Now, multiplying the equation \eqref{2.34} by $u$ and integrating the resulting equation over $(R_1,+\infty)\times\mathbb{T}^{d-1}$ yields that
$$-\int_{R_1}^{+\infty}\int_{\mathbb{T}^{d-1}}\partial_i\left[(a_{+,ij}m_++\phi_{+,ij})\partial_{j}u\right]\cdot u=-\int_{R_1}^{+\infty}\int_{\mathbb{T}^{d-1}}a_{+,11}vu.$$
A direct computation shows that
$$\begin{aligned}\text{left term}=&\int_{R_1}^{+\infty}\int_{\mathbb{T}^{d-1}}(a_{+,ij}m_++\phi_{+,ij})\partial_{j}u\partial_i u+\int_{\mathbb{T}^{d-1}\times\{R_1\}}(a_{+,1j}m_++\phi_{+,1j})\partial_{j}u\cdot u\\
\geq&\tilde{c_1}\mu \int_{R_1}^{+\infty}\int_{\mathbb{T}^{d-1}}|\nabla u|^2+\int_{\mathbb{T}^{d-1}\times\{R_1\}}(a_{+,1j}m_++\phi_{+,1j})\partial_{j}u\cdot \left(u-\fint_{\mathbb{T}^{d-1}\times\{R_1\}}u\right)\\
\geq& \tilde{c_1}\mu \int_{R_1}^{+\infty}\int_{\mathbb{T}^{d-1}}|\nabla u|^2-C \int_{\mathbb{T}^{d-1}\times\{R_1\}}|\nabla u|^2,\end{aligned}$$
where we have used the equality \eqref{2.37} in the inequality above, and
$$\begin{aligned}\text{right term}=&-\int_{R_1}^{+\infty}\int_{\mathbb{T}^{d-1}}a_{+,11}vu
=-\int_{R_1}^{+\infty}\int_{\mathbb{T}^{d-1}}a_{+,11}v\cdot\left(u-\fint_{\mathbb{T}^{d-1}\times\{y_1\}}u\right)\\[8pt]
\leq&C\left(\int_{R_1}^{+\infty}\int_{\mathbb{T}^{d-1}}|v|^2\right)^{1/2}\cdot\left(\int_{R_1}^{+\infty}\int_{\mathbb{T}^{d-1}}|\nabla u|^2\right)^{1/2},
\end{aligned}$$
where we have used the equality \eqref{2.35} in the inequality above.

Consequently, the above three inequalities directly implies the desired estimate \eqref{2.33}.
\end{proof}

\noindent\textbf{Proof of Theorem \ref{t1.1}}: Now, we are ready to prove Theorem \ref{t1.1}.\\

\noindent $\bullet$ The decay estimates: for any $R_1\in (1,+\infty)$, denote
$$E(R_1)=:\int_{R_1}^{+\infty}\int_{\mathbb{T}^{d-1}}\left(|v|^2+|\nabla v|^2+|\nabla u|^2\right).$$
Similar to the proofs of Lemmas \ref{l2.3} and \ref{l2.5}, there holds 
\begin{equation}\label{2.38}
\int_{R_1}^{+\infty}\int_{\mathbb{T}^{d-1}}|\nabla v|^2
\leq
C\int_{\mathbb{T}^{d-1}\times\{R_1\}}
\left(|v|^2+|\nabla v|^2\right)
+
C\int_{R_1}^{+\infty}\int_{\mathbb{T}^{d-1}}|v|^2,
\end{equation}
and 
\begin{equation}\label{2.39}
\int_{R_1}^{+\infty}\int_{\mathbb{T}^{d-1}}|v|^2 \leq C\int_{\mathbb{T}^{d-1}\times\{R_1\}}\left(|v|^2+|\nabla v|^2+|\nabla u|^2\right).
\end{equation}
In view of \eqref{2.33} and \eqref{2.38}-\eqref{2.39}, we have
$$E(R_1)\leq -CE'(R_1),\text{ for any }R_1\in (1,+\infty).$$

\noindent
Then it follows from the Grownwall's Lemma that
$$E(R_1)\leq Ce^{-CR_1}E(1)\leq Ce^{-CR_1}, \text{ for }R_1>1,$$
where we have used $v\in H^1(\mathbb{D})$ and \eqref{2.36}.

Now, by rewriting the equation \eqref{2.11} as the divergence form, it follows from the elliptic theory of divergence form (see \cite[Chapter 8]{MR1814364} for example) after using the $\mathbb{D}$-periodicity that
\begin{equation*}\begin{aligned}
\left|v(y)\right|+\left|\nabla v(y)\right|\leq& C\left(\fint_{(y_1-1,y_1+1)\times \mathbb{T}^{d-1}}\left|v\right|^2dy\right)^{1/2}\\[8pt]
\leq& C \exp\{-C|y_1|\},\ \text{ for }y_1>2.
\end{aligned}\end{equation*}
Similarly, there also holds
\begin{equation*}\begin{aligned}
\left|v(y)\right|+\left|\nabla v(y)\right|\leq& C \exp\{-C|y_1|\},\ \text{ for }y_1<-2,
\end{aligned}\end{equation*}
which completes the proof of the decay estimates \eqref{***} and $\eqref{1.12}_{2-3}$.\\

\noindent $\bullet$ The positivity of $m$: by the decay estimates $\eqref{***}$, there exists $R_0>1$, such that
$$|m-q_+m_+\psi_+-q_-m_-\psi_-|\leq \frac12\min_{y\in \mathbb{T}^{d}}(q_+m_+,q_-m_-),\text{ for }|y_1|\geq R_0,$$
which implies that
\begin{equation}\label{2.40}
\frac 1 2 \min_{y\in \mathbb{T}^{d}}(q_+m_+,q_-m_-)\leq m\leq \frac 3 2 \max_{y\in \mathbb{T}^{d}}(q_+m_+,q_-m_-)\text{ for }|y_1|\geq R_0.\end{equation}
Moreover, note that in $\mathbb{D}_{R_0}$, $m$ satisfies
$$
\partial_{ij}\left(a_{ij} m\right)-\partial_{i}\left(b_{i}m\right)=0\text{ in }\mathbb{D}_{R_0},\\[5pt]
$$
and $m$ satisfies the boundary condition \eqref{2.40} for $|y_1|=R_0$.
Now, by the weak maximum principle in Lemma \ref{l2.1}, there holds
\begin{equation*}
\frac 1 2 \min_{y\in \mathbb{T}^{d}}(q_+m_+,q_-m_-)\leq m\leq \frac 3 2 \max_{y\in \mathbb{T}^{d}}(q_+m_+,q_-m_-)\text{ for }y\in \mathbb{D}_{R_0},\end{equation*}
which combined with \eqref{2.40} implies the desired estimate $\eqref{1.12}_4$. Consequently, we complete the proof of Theorem \ref{t1.1}.\\

\section{Homogenization and convergence rates}\label{s3}
\noindent In this section, we will briefly explain how the decay estimates \eqref{1.12} can be applied to the oscillating problem of non-divergence form. 
Similar to the problem considered in \cite{MR4882925}, for $0<\varepsilon<1$ and $d\geq 3$, we consider the following equation
\begin{equation}\label{3.1}
\mathcal{L}_\varepsilon u_\varepsilon={{a}}_{ij}^\varepsilon\partial_{ij}u_\varepsilon+\frac{1}{\varepsilon}{b}_i^\varepsilon\partial_i u_\varepsilon=f\text{ in }\mathbb{R}^d,\quad 
u_\varepsilon \in \dot{H}^1(\mathbb{R}^d),
\end{equation}
where the coefficients satisfy the conditions in Theorem \ref{t1.1} and we have used the notation $f^\varepsilon(x)=:f(x/\varepsilon)$ if the content is understood. Multiplying the equation \eqref{3.1} by $m^\varepsilon(x)$ defined in \eqref{1.12} and setting
\begin{equation}\label{3.2}
\tilde{a}_{ij}^\varepsilon={a}_{ij}^\varepsilon m^\varepsilon,\quad \tilde{\beta}_{i}^\varepsilon={b}_{i}^\varepsilon m^\varepsilon,\quad \tilde{f}^\varepsilon=f m^\varepsilon,
\end{equation}
we obtain
$$\tilde{a}_{ij}^\varepsilon\partial_{ij}u_\varepsilon+\frac{1}{\varepsilon}\tilde{\beta}_{i}^\varepsilon\partial_i u_\varepsilon=\tilde{f}^\varepsilon\text{ in }\mathbb{R}^d.$$
Therefore, $u_\varepsilon$ is a solution of
\begin{equation}\label{3.3}
\mathcal{L}_\varepsilon u_\varepsilon=:
\partial_i\left(\tilde{a}_{ij}^\varepsilon\partial_{j}u_\varepsilon\right)
+\frac{1}{\varepsilon}{\tilde{b}}_i^\varepsilon
\partial_i u_\varepsilon =\tilde{f}^\varepsilon\text{ in }\mathbb{R}^d,\quad u_\varepsilon \in \dot{H}^1(\mathbb{R}^d),
\end{equation}
where
\begin{equation}\label{3.4}
\tilde{b}_i(y)=\tilde{\beta}_{i}(y)-\partial_{j}\tilde{a}_{ij}(y)\quad \text{is }\mathbb{D}\text{-periodic}.
\end{equation}
Then, it follows from $\eqref{1.12}_1$  that
\begin{equation}\label{3.5}
\partial_{i}\tilde{b}_i=0\quad \text{in}\quad\mathbb{D}.
\end{equation}
Similarly, replacing $m$, $a_{ij}$ and $b$ by $m_\pm$, $a_{\pm,ij}$ and ${b}_\pm$ in  \eqref{3.2}-\eqref{3.4}, respectively, gives the definition of the operator $\mathcal{L}_{\pm,\varepsilon}$. Now, due to
$$\partial_{i}\tilde{b}_{\pm,i}=0 \text{ in }\mathbb{T}^d,\quad \text{ and }\int_{\mathbb{T}^d}\tilde{b}=0,$$
there exist the so-called  flux correctors $\phi_\pm\in C^{1,\beta}(\mathbb{T}^d)$ \cite[Proposition 3.1]{shen2018periodic}, such that
$\tilde{b}_{\pm,i}=\partial_j \phi_{\pm,ji}$ with $\phi_{\pm,ij}=-\phi_{\pm,ji}$. Then a direct computation shows that
$$\varepsilon^{-1}\tilde{b}_\pm^\varepsilon \cdot \nabla u_\varepsilon=\partial_{x_i}\left(\phi^\varepsilon_{\pm,ij}\partial_{x_j}
u_\varepsilon\right)$$ and
\begin{equation}\label{3.6}
\mathcal{L}_{\pm,\varepsilon}u_\varepsilon=
\operatorname{div}\left[\left(\tilde{A}_{\pm}^\varepsilon+\phi_{\pm}^\varepsilon\right)\nabla u_\varepsilon\right].
\end{equation}
Note that $(\tilde{A}_\pm+\phi_\pm)\xi\cdot\xi=\tilde{A}_\pm\xi\cdot\xi$ for any $\xi\in \mathbb{R}^d$, then the problem of non-divergence form is finally reduced to the problem of divergence form, the case considered in \cite{shen2018periodic} if $m\equiv 1$. To proceed, in view of \eqref{1.12} \eqref{3.2} and \eqref{3.4}, we know
\begin{equation}\label{3.7}
\left\{\begin{aligned}
&|\tilde{b}-q_+\tilde{b}_+|(y)\leq C\exp\{-C|y_1|\},\quad {for }\ y_1>1,\\[5pt]
&|\tilde{b}-q_-\tilde{b}_-|(y)\leq C\exp\{-C|y_1|\},\quad {for }\ y_1<-1.\\[5pt]
\end{aligned}\right.\end{equation}
Then, by using \cite[Lemma 2.6]{MR4882925} (in fact, we do not use the fact that the leading coefficient $A$ is 1-periodic in this lemma), there exist a $\mathbb{D}$-periodic matrix function $\phi_b$ and some anti-symmetric constant matrices $M_{\pm}$, such that
\begin{equation}\label{3.8}
\begin{aligned}
&\tilde{b}_i(y)=\partial_{k}\phi_{\tilde{b}, ki}(y),\ \phi_{\tilde{b}, ki}(y)=-\phi_{\tilde{b}, ik}(y),\ \phi_{\tilde{b}}\in C^{1,\beta}(\mathbb{D});\\[5pt]
&\left|\phi_{\tilde{b},ki}-q_+\phi_{+,ki}+M_{+,ki}\right|\leq C\exp\{-C|y_1|\},\quad {for }\ y_1>1;\\[5pt]
&\left|\phi_{\tilde{b},ki}-q_-\phi_{-,ki}+M_{-,ki}\right|\leq C\exp\{-C|y_1|\},\quad {for }\ y_1<-1.
 \end{aligned}\end{equation}
To proceed, by the similar computation of \eqref{3.6}, the problem of non-divergence form \eqref{3.1} is finally reduced to the following problem of divergence form:
\begin{equation}\label{3.9}
\mathcal{L}_{\varepsilon}u_\varepsilon=
\operatorname{div}\left[\left(\tilde{A}^\varepsilon+\phi_{\tilde{b}}^\varepsilon\right)\nabla u_\varepsilon\right]=f m^\varepsilon\text{ in }\mathbb{R}^d,\quad u_\varepsilon \in \dot{H}^1(\mathbb{R}^d).
\end{equation}
Note that, by \eqref{1.12}, the coefficients in \eqref{3.6} and \eqref{3.9} satisfy the assumptions (4.2) in \cite{MR4882925}. Therefore, the results obtained in \cite{MR4882925} continue to hold true in this paper, which are stated in the following corollary.
\begin{cor}[Homogenization and convergence rates]Under the conditions in Theorem \ref{t1.1} and for any bounded Lipschitz domain $\Omega$ in $\mathbb{R}^d$, $f\in L^2(\Omega)$ with $d\geq 2$, the effective equation of $${\mathcal{L}}_\varepsilon u_\varepsilon=fm^\varepsilon\text{ in }\Omega$$
 is given by
$${\mathcal{L}}_0 u_0=:\operatorname{div}(\widehat{A}(x)\nabla u_0)= f(x)(q_+{I}_{x_1>0}+q_-I_{x_1<0})(x)\text{ in }\Omega,$$
for
\begin{equation}
\widehat{A}(x)=\left\{\begin{aligned}
q_+\widehat{A_+},\quad\text{ if }\quad x_1>0,\\[5pt]
q_-\widehat{A_-},\quad\text{ if }\quad x_1<0,
\end{aligned}\right.
\end{equation}
where $\widehat{A_\pm}$ are the homogenized matrices associated with $\mathcal{L}_{\pm,\varepsilon}$ in \eqref{3.6}, and ${I}_{x_1>0}$ and $I_{x_1<0}$ are the characteristic functions.

Moreover, assume $f\in W^{1,p}(\mathbb{R}^d)\cap L^{2d/(d+4)}(\mathbb{R}^d)$
for some $p\in (d,\infty)$  with
$d\geq 3$. Let $u_\varepsilon,u_0\in \dot{H}^1(\mathbb{R}^d)$, respectively, solve the equation 
$$\mathcal{L}_\varepsilon u_\varepsilon=f m^\varepsilon\text{ in }\mathbb{R}^d$$
 and 
 $$\mathcal{L}_0 u_0=f(x)(q_+{I}_{x_1>0}+q_-I_{x_1<0})(x)\text{  in }\mathbb{R}^d.$$  
Then there holds
$$\|u_\varepsilon-u_0\|_{L^\frac{2d}{d-2}(\mathbb{R}^d)}+\|u_\varepsilon-u_0\|_{L^\infty(\mathbb{R}^d)}\leq C \varepsilon \|f\|_{W^{1,p}(\mathbb{R}^d)\cap L^{2d/(d+4)}(\mathbb{R}^d)},$$
 where the constant $C$ depends only on $d$, $p$, and the coefficients.
\end{cor}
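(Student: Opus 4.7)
The plan is to reduce the oscillating non-divergence equation \eqref{3.1} to the divergence-form problem \eqref{3.9}, whose coefficients already enjoy precisely the interface structure analyzed quantitatively in \cite{MR4882925}, and then to invoke the homogenization theorems of that work directly. The reduction, laid out in Section \ref{s3}, multiplies \eqref{3.1} by $m^\varepsilon$, using Theorem \ref{t1.1} to supply $m$ together with its exponential proximity to $q_\pm m_\pm$, and introduces the flux corrector $\phi_{\tilde b}$ from \eqref{3.8} to absorb the drift as a perfect divergence of an antisymmetric matrix. This produces a conservative equation whose symmetric part $\tilde A^\varepsilon = A^\varepsilon m^\varepsilon$ satisfies the ellipticity of \eqref{1.2}, together with an antisymmetric $\mathbb D$-periodic perturbation $\phi_{\tilde b}^\varepsilon$ that, via \eqref{3.7}--\eqref{3.8}, is exponentially close to $q_\pm(\tilde A_\pm + \phi_\pm) - q_\pm M_\pm$ on each half-space.

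Identification of the effective equation proceeds separately on $\{x_1 > 0\}$ and $\{x_1 < 0\}$. On each half, the problem is a small perturbation of a purely periodic divergence-form problem with coefficients $q_\pm(\tilde A_\pm + \phi_\pm)^\varepsilon$; since the constant antisymmetric matrices $M_\pm$ from the corrector construction contribute zero divergence, they drop out of the periodic cell problem, and the standard homogenization procedure on $\mathbb T^d$ yields the homogenized matrix $q_\pm \widehat{A_\pm}$. The right-hand side $f m^\varepsilon$ converges in the appropriate dual sense to $f(q_+ I_{x_1 > 0} + q_- I_{x_1 < 0})$ because $\fint_{\mathbb T^d} m_\pm = 1$ and, by Theorem \ref{t1.1}, $m$ is two-scale close to $q_\pm m_\pm$ on the corresponding half-space. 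The quantitative $O(\varepsilon)$ rate then follows by applying the interface convergence theorem of \cite{MR4882925} to \eqref{3.9}: one expands $u_\varepsilon$ via $u_0$ plus the periodic correctors $\chi_\pm$ on each side and a boundary-layer corrector near $\{x_1 = 0\}$, estimates the first-order error using the exponential decay of $m - q_\pm m_\pm$ and of $\phi_{\tilde b} - q_\pm \phi_\pm + M_\pm$, and absorbs the source $f m^\varepsilon - f(q_+ I_{x_1 > 0} + q_- I_{x_1 < 0})$ into the energy argument using the hypothesis $f \in W^{1,p}(\mathbb R^d) \cap L^{2d/(d+4)}(\mathbb R^d)$ with $p > d$ together with the Sobolev embedding for $\dot H^1(\mathbb R^d)$ in $d \geq 3$.

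The main obstacle is verifying hypothesis (4.2) of \cite{MR4882925} in the present, less regular setting: there $A, b \in C^\infty$ were assumed, whereas here only $A \in C^{1,\beta}$ and $b \in C^{0,\beta}$ are available. Fortunately, Theorem \ref{t1.1} already delivers $m \in C^{1,\beta}(\mathbb D)$ with the required exponential closeness to $q_\pm m_\pm$, which via Lemma 2.6 of \cite{MR4882925} transfers to the corresponding statements for $\tilde b$ and $\phi_{\tilde b}$. One then only needs to check that every quantitative step in \cite{MR4882925}, including the $C^{1,\alpha}$-regularity of the periodic correctors and the interior Schauder estimates used there, remains valid under a $C^{1,\beta}$ leading coefficient and $C^{0,\beta}$ drift; all of these estimates are classical in that regularity class. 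Once this regularity book-keeping is complete, the convergence bound of \cite{MR4882925} transfers verbatim and yields the claimed $O(\varepsilon)$ estimate in $L^{2d/(d-2)} \cap L^\infty$.
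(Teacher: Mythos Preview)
Your proposal is correct and follows essentially the same route as the paper: multiply by $m^\varepsilon$, use Theorem~\ref{t1.1} and \cite[Lemma 2.6]{MR4882925} to produce the flux corrector $\phi_{\tilde b}$ with the decay \eqref{3.8}, rewrite the problem in the divergence form \eqref{3.9}, observe that the resulting coefficients satisfy hypothesis (4.2) of \cite{MR4882925}, and then invoke the homogenization and $O(\varepsilon)$ convergence results of that reference directly. The paper's own argument is in fact terser than yours---it simply notes after \eqref{3.9} that the hypotheses of \cite{MR4882925} are met and states the corollary---so your additional remarks on why the antisymmetric constants $M_\pm$ drop out, on the two-scale limit of $fm^\varepsilon$, and on the regularity bookkeeping under $C^{1,\beta}/C^{0,\beta}$ coefficients are welcome elaborations rather than deviations.
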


\begin{center}{\textbf{Acknowledgements}}
\end{center}
The work of Y. Zhang was supported by the National Natural Science Foundation of China under Grant 12401256 and by the
Hubei Provincial Natural Science Foundation of China under Grant 2024AFB357.\\

\noindent \textbf{Declarations}\\

On behalf of all authors, the corresponding author states that there is no conflict of interest. Data sharing
not applicable to this article as no datasets were generated or analyzed during the current study.

\normalem\bibliographystyle{plain}{}


\begin{thebibliography}{10}

\bibitem{2022Green}
S.~Armstrong, B.~Fehrman, and J.~Lin.
\newblock Green function and invariant measure estimates for nondivergence form
  elliptic homogenization.
\newblock {\em arXiv:2211.13279}.

\bibitem{MR3665674}
S.~Armstrong and J.~Lin.
\newblock Optimal quantitative estimates in stochastic homogenization for
  elliptic equations in nondivergence form.
\newblock {\em Arch. Ration. Mech. Anal.}, 225(2):937--991, 2017.

\bibitem{MR3269637}
S.~Armstrong and C.~Smart.
\newblock Quantitative stochastic homogenization of elliptic equations in
  nondivergence form.
\newblock {\em Arch. Ration. Mech. Anal.}, 214(3):867--911, 2014.

\bibitem{MR978702}
M.~Avellaneda and F.~Lin.
\newblock Compactness methods in the theory of homogenization. {II}.
  {E}quations in nondivergence form.
\newblock {\em Comm. Pure Appl. Math.}, 42(2):139--172, 1989.

\bibitem{MR765409}
P.~Bauman.
\newblock Positive solutions of elliptic equations in nondivergence form and
  their adjoints.
\newblock {\em Ark. Mat.}, 22(2):153--173, 1984.

\bibitem{bensoussan2011asymptotic}
A.~Bensoussan, J.~L. Lions, and G.~Papanicolaou.
\newblock {\em Asymptotic analysis for periodic structures}, volume 374.
\newblock American Mathematical Soc., 2011.

\bibitem{MR3421758}
X.~Blanc, Le~B. C., and P.-L. Lions.
\newblock Local profiles for elliptic problems at different scales: defects in,
  and interfaces between periodic structures.
\newblock {\em Comm. Partial Differential Equations}, 40(12):2173--2236, 2015.

\bibitem{MR3443169}
V.~Bogachev, N.~Krylov, M.~R\"{o}ckner, and S.~Shaposhnikov.
\newblock {\em Fokker-{P}lanck-{K}olmogorov equations}, volume 207 of {\em
  Mathematical Surveys and Monographs}.
\newblock American Mathematical Society, Providence, RI, 2015.

\bibitem{MR3694737}
V.~Bogachev and S.~Shaposhnikov.
\newblock Integrability and continuity of solutions to double divergence form
  equations.
\newblock {\em Ann. Mat. Pura Appl. (4)}, 196(5):1609--1635, 2017.

\bibitem{MR3747493}
H.~Dong, L.~Escauriaza, and S.~Kim.
\newblock On {$C^1$}, {$C^2$}, and weak type-{$(1,1)$} estimates for linear
  elliptic operators: part {II}.
\newblock {\em Math. Ann.}, 370(1-2):447--489, 2018.

\bibitem{MR4887768}
H.~Dong, D.~Kim, and S.~Kim.
\newblock The {D}irichlet problem for second-order elliptic equations in
  non-divergence form with continuous coefficients.
\newblock {\em Math. Ann.}, 392(1):573--618, 2025.

\bibitem{MR1245308}
M.~Freidlin and A.~Wentzell.
\newblock Diffusion processes on graphs and the averaging principle.
\newblock {\em Ann. Probab.}, 21(4):2215--2245, 1993.

\bibitem{MR1814364}
D.~Gilbarg and N.~Trudinger.
\newblock {\em Elliptic partial differential equations of second order}.
\newblock Classics in Mathematics. Springer-Verlag, Berlin, 2001.
\newblock Reprint of the 1998 edition.

\bibitem{MR4491712}
X.~Guo, J.~Peterson, and H.~Tran.
\newblock Quantitative homogenization in a balanced random environment.
\newblock {\em Electron. J. Probab.}, 27:Paper No. 132, 31, 2022.

\bibitem{guo2020conjecture}
X.~Guo and H.~Tran.
\newblock A conjecture on optimal rates of convergence in periodic
  homogenization of non-divergence form linear elliptic pde in two dimensions.
\newblock 2020.

\bibitem{guo2023optimal}
X.~Guo and H.~Tran.
\newblock Optimal convergence rates in stochastic homogenization in a balanced
  random environment.
\newblock {\em arXiv preprint arXiv:2301.01267}, 2023.

\bibitem{MR4354730}
X.~Guo, H.~Tran, and Y.~Yu.
\newblock Remarks on optimal rates of convergence in periodic homogenization of
  linear elliptic equations in non-divergence form.
\newblock {\em Partial Differ. Equ. Appl.}, 1(4):Paper No. 15, 16, 2020.

\bibitem{MR2789509}
M.~Hairer and C.~Manson.
\newblock Periodic homogenization with an interface: the multi-dimensional
  case.
\newblock {\em Ann. Probab.}, 39(2):648--682, 2011.

\bibitem{MR1329546}
V.~V. Jikov, S.~M. Kozlov, and O.~A. Ole\u{\i}nik.
\newblock {\em Homogenization of differential operators and integral
  functionals}.
\newblock Springer-Verlag, Berlin, 1994.

\bibitem{MR4657308}
W.~Jing and Y.~Zhang.
\newblock On the periodic homogenization of elliptic equations in nondivergence
  form with large drifts.
\newblock {\em Multiscale Model. Simul.}, 21(4):1486--1501, 2023.

\bibitem{MR3974127}
M.~Josien.
\newblock Some quantitative homogenization results in a simple case of
  interface.
\newblock {\em Comm. Partial Differential Equations}, 44(10):907--939, 2019.

\bibitem{MR4207176}
M.~Josien and C.~Raithel.
\newblock Quantitative homogenization for the case of an interface between two
  heterogeneous media.
\newblock {\em SIAM J. Math. Anal.}, 53(1):813--854, 2021.

\bibitem{MR1121940}
I.~Karatzas and S.~Shreve.
\newblock {\em Brownian motion and stochastic calculus}, volume 113 of {\em
  Graduate Texts in Mathematics}.
\newblock Springer-Verlag, New York, second edition, 1991.

\bibitem{MR2382139}
G.~Pavliotis and A.~Stuart.
\newblock {\em Multiscale methods}, volume~53 of {\em Texts in Applied
  Mathematics}.
\newblock Springer, New York, 2008.
\newblock Averaging and homogenization.

\bibitem{MR987631}
H.~Risken.
\newblock {\em The {F}okker-{P}lanck equation}, volume~18 of {\em Springer
  Series in Synergetics}.
\newblock Springer-Verlag, Berlin, second edition, 1989.
\newblock Methods of solution and applications.

\bibitem{shen2018periodic}
Z.~Shen.
\newblock {\em Periodic homogenization of elliptic systems}, volume 269 of {\em
  Operator Theory: Advances and Applications}.
\newblock Birkh\"{a}user/Springer, Cham, 2018.
\newblock Advances in Partial Differential Equations (Basel).

\bibitem{MR4308690}
T.~Sprekeler and H.~Tran.
\newblock Optimal convergence rates for elliptic homogenization problems in
  nondivergence-form: analysis and numerical illustrations.
\newblock {\em Multiscale Model. Simul.}, 19(3):1453--1473, 2021.

\bibitem{MR4882925}
Y.~Zhang.
\newblock Quantitative estimates in elliptic homogenization of non-divergence
  form with unbounded drifts and an interface.
\newblock {\em Calc. Var. Partial Differential Equations}, 64(4):Paper No. 114,
  33, 2025.

\bibitem{MR4299825}
J.~Zhuge.
\newblock Regularity of a transmission problem and periodic homogenization.
\newblock {\em J. Math. Pures Appl. (9)}, 153:213--247, 2021.

\end{thebibliography}
\end{document}